\theoremstyle{plain}
\newtheorem{theorem}{Theorem}[section]
\newtheorem{lemma}[theorem]{Lemma}
\newtheorem{corollary}[theorem]{Corollary}
\newtheorem{proposition}[theorem]{Proposition}
\newtheorem{remark}[theorem]{Remark}
\newtheorem{example}[theorem]{Example}
\newtheoremstyle{citing}{3pt}{3pt}{\itshape}{0pt}{\bfseries}%
{.}{ }{\thmnote{#3}}\theoremstyle{citing}
\newcommand{\PSL}{\operatorname{PSL}}
\newcommand{\SL}{\operatorname{SL}}
\begin{document}
\title{Explicit Families Of Elliptic Curves With The Same Mod 6 Representations}
\author{Zexiang Chen}
\date{}
\maketitle
\begin{abstract}
Let $E$ be an elliptic curve over $\mathbb{Q}$. In this paper we study two certain modular curves which parameterize families of elliptic curves which are directly (resp. reverse) 6-congruent to $E$.
\end{abstract}

\section{Introduction and Notations}
\subsection{Introduction and General Background}
$\quad$ If $n$ is a positive integer and $E$ is an elliptic curve over $\mathbb{Q}$ with algebraic closure $\bar{\mathbb{Q}}$, let $E[n]$ be the $n$-torsion subgroup of $E$, which is the kernel of multiplication by $n$ on $E(\bar{\mathbb{Q}})$. Throughout the paper, each elliptic curve will be assumed to be defined over $\mathbb{Q}$.

Elliptic curves $E_1$ and $E_2$ are said to be $n$-congruent if $E_1[n]$ and $E_2[n]$ are isomorphic as Galois modules. They are directly $n$-congruent if the isomorphism $\phi : E_1[n] \cong E_2[n]$ respects the Weil pairing and reverse $n$-congruent if $e_n(\phi P,\phi Q) = e_n(P,Q)^{-1}$ for all $P,Q \in E_1[n]$ where $e_n$ is the Weil pairing.
The families of elliptic curves which are directly (resp. reverse) n-congruent to a given elliptic curve $E$ are parameterized by the
modular curve $Y_E(n) = X_E(n) \backslash \{\text{cusps}\}$ (resp. $Y^{-}_E(n)=X^{-}_E(n) \backslash \{\text{cusps}\}$). That means each point on $Y^{\pm}_E(n)$ corresponds to an isomorphism class $(F,\phi)$ where $F$ is an elliptic curve and $\phi$ is a Galois equivariant isomorphism $E[n] \to F[n]$.

Let $W_{E}(n)$ (resp. $W^{-}_{E}(n)$) be elliptic surface over $X_{E}(n)$ (resp. $X^{-}_{E}(n)$) whose fibers above points of $Y_{E}(n)$ (resp. $Y^{-}_{E}(n)$) are elliptic curves which are directly (resp. reverse) $n$-congruent to $E$.

It was shown in [S] that $X^{\pm}_E(n)$ are twists of the classical modular curves $X(n)$. For the case $n \le 5$, the modular curve $X(n)$ has genus zero and there are infinitely many rational points on $X_E(n)$ for $n \le 5$ and the structures of $X_E(n)$ were studied by Rubin and Silverberg in [RS1],[RS2] and [S]. The reverse case $X^{-}_E(n)$ was studied by Fisher using invariant theory [F1], [F2].

For $n=6$, the curve $X(6)$ has genus one and so $X_{E}(6)$ and $X^{-}_{E}(6)$ are both elliptic curves over $\bar{\mathbb{Q}}$.
A model for $X_{E}(6)$ was obtained independently by Papadopoulos in [P] and by Rubin and Silverberg in [RS3].
Equation of the $W_E(6)$ was computed by Roberts in [R] for some elliptic curve $E$ with specified $j$-invariant. In this paper we compute the equations for $X^-_E(6)$ and $W^{\pm}_E(6)$ for every elliptic curve $E$.

\begin{flushleft}
{\bf Acknowledgement}
\end{flushleft}
Most symbolic computations were done by MAGMA [MAG]. I would like to thank T.A.Fisher for some useful discussion and advice.
\subsection{Main Theorem}
\begin{theorem} Let $E: y^2=x^3+ax+b$ be an elliptic curve. A model for
$X^{-}_E(6)$ is given by intersection of 9 quadrics in $\mathbb{P}^5$ with coordinates $(x_1:\ldots:x_6)$
\begin{align*}
s_1&=-6x_1x_5 + 24ax_1x_6 - 6x^2_2 + 24ax_2x_3 - 6x_2x_4 + 24ax_3x_4\\
 &+ 72bx_4x_6 + 2ax^2_5 + 8a^2x_5x_6 +Dx^2_6,\\
s_2&=-6x_1x_3 + x_2x_5 + 2ax_2x_6 - 36bx^2_3 + 2ax_3x_5 + 16a^2x_3x_6 + x_4x_5\\
& + 2ax_4x_6 + 12abx^2_6,\\
s_3&=12ax_1x_3 + 18bx_1x_6 + 18bx_3x_4 - 2ax_4x_5 - 4a^2x_4x_6 + 3bx^2_5,\\
s_4&=-12ax_2x_3 - 18bx_2x_6 - 18bx_3x_5 - 3x^2_4 - ax^2_5 + 4a^2x_5x_6,\\
s_5&=3x^2_2 - 48a^2x^2_3 - 144abx_3x_6 - 36bx_4x_6 + ax^2_5 - 8a^2x_5x_6 + 16a^3x^2_6,\\
s_6&=-3x_1x_4 + 18bx_2x_3 - ax_2x_5 - 4a^2x_2x_6 - 4a^2x_3x_5 - 6abx_5x_6,\\
s_7&=-108bx_1x_3 + 6ax^2_2 - 24a^2x_2x_3 + 18bx_2x_5 - 36abx_4x_6 - 2a^2x^2_5\\
& - 8a^3x_5x_6-aDx^2_6,\\
s_8&=3x_1x_2 - 72abx^2_3 - 216b^2x_3x_6 + ax_4x_5 + 8a^2x_4x_6 - 12abx_5x_6 + 24a^2bx^2_6,\\
s_9&=36x^2_1 + 12ax^2_2 + 12ax^2_4 + 4a^2x^2_5 + Dx_5x_6.\\
\end{align*}
where $D$ is the discriminant of $E$.
\end{theorem}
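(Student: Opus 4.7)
The plan is to reduce to the already understood level 2 and level 3 cases using the Chinese Remainder decomposition $E[6] = E[2] \oplus E[3]$ of Galois modules. A reverse 6-congruence $\phi: E[6] \to F[6]$ splits as a pair $(\phi_2, \phi_3)$ where $\phi_2: E[2]\to F[2]$ is a 2-congruence (direct and reverse coincide, since the Weil pairing on 2-torsion lands in $\mu_2 = \mu_2^{-1}$) and $\phi_3: E[3] \to F[3]$ is a reverse 3-congruence. This gives a Galois-equivariant map $X^{-}_E(6) \to X_E(2) \times_{X(1)} X^{-}_E(3)$, where explicit models for the factors are known from the work of Rubin--Silverberg and Fisher. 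So my starting point is the compatibility between such a decomposition and a degree-6 embedding of $X^{-}_E(6)$ in $\mathbb{P}^5$.

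To produce equations, I would exploit that $X(6)$ is a genus one curve and that, on any such curve, a degree 6 line bundle $\mathcal{L}$ gives $h^0(\mathcal{L}) = 6$ and $h^0(\mathcal{L}^{\otimes 2}) = 12$ by Riemann--Roch; the space of quadrics containing the image in $\mathbb{P}^5$ therefore has dimension $\binom{7}{2}-12 = 9$, matching the count in the statement. The point is to pick an appropriate $\mathcal{L}$ on $X^{-}_E(6)$ whose global sections form a 6-dimensional $\Gal(\bar{\mathbb{Q}}/\mathbb{Q})$-representation $V$ built canonically from $E$, then take $x_1,\dots,x_6$ to be a $\mathbb{Q}$-basis of $V$ and cut out the image by the kernel of $\mathrm{Sym}^2 V \to H^0(\mathcal{L}^{\otimes 2})$. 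Concretely, one writes the classical model of $X(6)$ over the splitting field $K = \mathbb{Q}(E[6])$ (for instance using the coordinates of Papadopoulos/Rubin--Silverberg for $X_E(6)$ combined with the twist by the cocycle encoding the inverted Weil pairing on $E[3]$), then descends to $\mathbb{Q}$ by averaging over $\Gal(K/\mathbb{Q})$.

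The main obstacle is the explicit computation of the descended basis and the resulting nine quadrics with their dependence on $a$, $b$, and $D = -16(4a^3+27b^2)$; the Galois action for the reverse case is conjugated by complex conjugation on $\mu_3$, which forces a different twist than in the direct case treated by Roberts. In practice I would proceed by invariant theory: express sections of $\mathcal{L}$ on the universal reverse 6-congruent family in terms of the Weierstrass data of $E$ and $F$, compute the nine relations in $\mathrm{Sym}^2 V$ symbolically (e.g.\ in Magma as in the acknowledgement), and then verify correctness by specializing at several elliptic curves $E$ with distinct $j$-invariants, checking that the resulting subscheme of $\mathbb{P}^5$ is a smooth genus one curve with the same $j$-invariant as $X^{-}_E(6)$ and the correct Galois structure, and by checking that pairs $(F,\phi)$ produced from Fisher's parameterization of reverse 3-congruences yield actual solutions of the system $s_1 = \cdots = s_9 = 0$.
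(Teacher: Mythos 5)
Your high-level framing agrees with the paper's: the CRT splitting of a reverse $6$-congruence into a $2$-congruence (where direct and reverse coincide) and a reverse $3$-congruence, and the Riemann--Roch count showing a genus one curve embedded by a degree $6$ line bundle in $\mathbb{P}^5$ is cut out by a $9$-dimensional space of quadrics, are both exactly what the paper uses. But the step that actually produces the stated equations is missing, and the route you propose for it would not work as described. Descending a model of $X(6)$ from the splitting field $K=\mathbb{Q}(E[6])$ by averaging over $\Gal(K/\mathbb{Q})$ is not feasible for generic $a,b$ (the generic Galois group is $\GL_2(\mathbb{Z}/6\mathbb{Z})$, of order $288$), and avoiding that computation is precisely the point of the paper's construction. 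The paper instead builds intermediate quotient curves that already have $\mathbb{Q}$-rational models read off from the known level $2$ and level $3$ families: the plane cubic $C_{Y^-}: y^3=\Delta_E(v^3+au^2v+bu^3)$, parameterizing $2$-congruent curves whose discriminant times $\Delta_E$ is a cube, and the quartic $2$-covering $C_{X^-}: y^2=\Delta_E(a\lambda^4+\cdots)$ coming from the analogous square condition at level $3$. It then realizes $X^-_E(6)$ as the fiber product of a $2$-covering $X^-_1$ and the $3$-covering $Y^-$ of the common Jacobian $Z^-_1: y^2z=x^3+z^3/\Delta_E$ and runs Fisher's explicit $6$-descent algorithm from [F3], which only requires adjoining a single root $\alpha$ of $x^3+ax+b$; the $27$ conjugate quadrics produced there span the $\mathbb{Q}$-rational $9$-dimensional space in the statement. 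Nothing in your proposal substitutes for this mechanism.

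Two further points would undermine your plan even as a strategy. First, the naive fiber product $X_E(2)\times_{X(1)}X^-_E(3)$ is singular wherever the two $j$-maps ramify, so mapping to it does not yield a smooth model; the paper must prove (Proposition 3.3, carried over to the reverse case) that the maps in the interior square of the diagram, taken over the curve $Z^-$ rather than over $X(1)$, are unramified, and that unramifiedness is what makes the fiber product there equal to $X^-_E(6)$. Second, your verification criteria are too weak to certify the answer: all the relevant curves here have $j$-invariant $0$, and a smooth genus one curve with the right $j$-invariant does not determine the twist (the paper's $X^-$ and $X^-_1$ differ only by a factor $-3$ in their defining equations and yet have different Jacobians). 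To confirm the modular interpretation one must exhibit the forgetful morphisms to $X_E(2)$ and $X^-_E(3)$ directly from the quadric model, which the paper does via the $2\times 2$ minors of the coordinate matrix and Theorem 4.7; your final check against Fisher's level $3$ families points in this direction but is not developed into anything that would pin down the nine quadrics above.
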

By using Theorem 1.1 we will then obtain a simpler model for $X^{-}_E(6)$
\begin{corollary} Let $E:y^2=x^3+ax+b$ be an elliptic curve. Then $X^{-}_E(6)$ is birational to the following curve defined by two equations in $\mathbb{A}^3$ with coordinates $(x,y,z)$:
\begin{align*}
f=&z^3-(36ax^2+12a^2)z+216bx^3-144a^2x^2-216abx\\
&-(16a^3+216b^2)+27y(64abx+96b^2)/D,\\
g=&y^2-D(ax^4+6bx^3-2a^2x^2-2abx+(-a^3/3-3b^2))\\
\end{align*}
where $D$ is the discriminant of $E$.
\end{corollary}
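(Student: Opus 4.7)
The plan is to exhibit an explicit birational map from the curve $X^-_E(6)\subset\mathbb{P}^5$ of Theorem 1.1 onto the affine curve $V(f,g)\subset\mathbb{A}^3$. Since $X^-_E(6)$ is a genus one curve, its function field has transcendence degree one, and any two nonsingular models are linked by a birational transformation; the real work is to write the correct one down and then verify it by eliminating variables against the ideal $I=(s_1,\ldots,s_9)$.

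The first step is to identify three rational functions $x,y,z$ on $X^-_E(6)$, realised as ratios of polynomials in $x_1,\ldots,x_6$ of total degree zero. A natural starting point is to fix one coordinate (say $x_6$) as a denominator and search for candidates of the form $x=\ell/x_6$, $y=p/x_6^2$, $z=q/x_6^2$, with $\ell$ linear and $p,q$ quadratic in the $x_i$, such that $x$ is transcendental over $\mathbb{Q}(a,b)$, and such that $y^2$ reduces modulo $I$ to $D(ax^4+6bx^3-2a^2x^2-2abx-a^3/3-3b^2)$. The form of the right-hand side of $g$ signals that $(x,y)$ ought to give a hyperelliptic presentation of $X^-_E(6)$; once $x$ and $y$ are pinned down, the shape of the depressed cubic in $f$, including the denominator $D$ in the coefficient of $y$, guides the choice of $z$.

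With candidates in hand the remainder is essentially a symbolic verification: after clearing denominators, $f(x,y,z)$ and $g(x,y,z)$ must be shown to lie in $I$, and this is a Gr\"obner basis reduction in MAGMA. To upgrade the induced morphism $X^-_E(6)\dashrightarrow V(f,g)$ from dominant to birational one can either compute the degree of the extension of function fields directly, or, more concretely, produce an inverse by expressing each ratio $x_i/x_j$ as a rational function of $x,y,z$, using the nine quadrics of Theorem 1.1 together with the two new relations $f=g=0$ to eliminate all but three of the projective coordinates.

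The main obstacle is guessing the correct rational functions $x,y,z$. Once they are in hand, every remaining step is mechanical, if computationally heavy. The search is substantially constrained by the $(a,b)$-weights visible in $f$ and $g$ (namely the weights $a\mapsto\lambda^4 a$, $b\mapsto\lambda^6 b$ of the Weierstrass scaling of $E$) and by the low degree of $\ell,p,q$ dictated by the quadratic nature of the $s_i$; together these should permit the choice to be pinned down by a short computer search and then verified as described.
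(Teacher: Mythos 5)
Your overall framework---realise $x,y,z$ as explicit rational functions on the nine-quadric model of Theorem 1.1, verify that $f$ and $g$ reduce to zero modulo the ideal $(s_1,\ldots,s_9)$, then check the induced map has degree one---is sound, and its final verification step is essentially what the paper does. But the entire content of the corollary sits in the step you defer to ``a short computer search,'' and the paper does not find $x,y,z$ by searching: they are read off from the covering structure established earlier in Section 4. Theorem 4.7 shows the forgetful morphism $X^-_E(6)\to X^-_E(3)$ is $(x_1:\cdots:x_6)\mapsto(x_3/3:x_6)$, so $x=x_3/x_6$ is forced as the pullback of the coordinate on $X^-_E(3)$; Proposition 4.12 shows this morphism factors through the double cover $C_{X^-}$ of Proposition 4.1, whose defining quartic is exactly $g$, and identifies the second coordinate of that factorization as $y=(x_2x_4-x_1x_5)/(2x_6^2)$; finally $z=x_5/x_6$ is adjoined and $f$ is computed as the cubic relation it satisfies over $\mathbb{Q}(x,y)$. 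In particular $g$ is not a relation to be confirmed by Gr\"obner reduction against a guessed $y$: it holds by construction, because $(x,y)$ \emph{is} the map to the $2$-covering $C_{X^-}$.

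Your ansatz ($x$ linear over $x_6$, and $y,z$ quadratic over $x_6^2$) does contain the correct functions, and the $(a,b)$-weight constraints would shrink the search, so your plan could in principle be completed; but as written it is an outline with the decisive identification missing. It also forgoes the structural reason the degree count closes: $\mathbb{Q}(x,y)$ is precisely the index-$3$ subfield of the function field of $X^-_E(6)$ corresponding to the quotient $\chi^-_3$ by $H\cong C_3$, so one only needs the cubic $f$ in $z$ to be irreducible over it to conclude that $x,y,z$ generate the whole function field. That is the (terse) argument the paper gives in place of your proposed inverse-map or field-degree computation, and without some such argument your claim of birationality---as opposed to mere dominance---remains unproved.
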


\section{Preliminary Material}
$\quad$In this section we will briefly give some material and preliminary results which can be found in the references.
\subsection{Modular Curves}
$\quad$Let $n \ge 2$ be an integer. Let $Y(n)$ denote the classical modular curve which parametrizes
isomorphism classes of pairs $(E, \phi)$ where $E$ is an elliptic curve and
$$\phi:  \mathbb{Z}/n\mathbb{Z} \times \mu_n \to E[n]$$
is an isomorphism such that
$$e_n(\phi(a_1,\zeta_1),\phi(a_2,\zeta_2))=\zeta^{a_1}_2/\zeta^{a_2}_1.$$
\begin{lemma}
Equivalently, $Y(n)$ parametrizes triples $(E,P,C)$ where $E$ is an elliptic curve, $P$ is a point of exact order $n$
and $C$ is a cyclic subgroup of order $n$ which does not contain $P$.
\end{lemma}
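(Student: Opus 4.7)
The plan is to exhibit explicit mutually inverse bijections between the two moduli data that are visibly functorial in the underlying curve, and hence descend to isomorphism classes.

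Given $(E,\phi)$ of the first kind, I would set $P := \phi(1,1)$ and $C := \phi(\{0\}\times\mu_n)$. Since $\phi$ is an isomorphism, $P$ has exact order $n$, $C$ is cyclic of order $n$, and $P \notin C$ because $(1,1) \notin \{0\}\times\mu_n$ and $\phi$ is injective. This direction is essentially unpacking definitions.

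For the converse, starting from $(E,P,C)$, I would use the Weil pairing to produce $\phi$ canonically. Since $e_n$ is non-degenerate and alternating, $e_n(P,\cdot)$ kills $\langle P\rangle$ and surjects onto $\mu_n$; because $P\notin C$, one has $\langle P\rangle\cap C=0$ and $\langle P\rangle+C=E[n]$, so the restriction
\[
g : C \longrightarrow \mu_n, \qquad Q \longmapsto e_n(P,Q),
\]
is an isomorphism. Define $\phi(a,\zeta) := aP + g^{-1}(\zeta)$. This sends a basis to a basis and is therefore an isomorphism. The Weil pairing compatibility is a short computation: expanding $e_n\!\bigl(a_1 P + g^{-1}(\zeta_1),\, a_2 P + g^{-1}(\zeta_2)\bigr)$ by bilinearity, the $P$--$P$ term and the $C$--$C$ term vanish (the Weil pairing is alternating and trivial on the cyclic subgroup $C$), and the two cross-terms combine via antisymmetry to $\zeta_2^{a_1}\zeta_1^{-a_2}$, which is exactly the required expression.

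It remains to check the two assignments are mutually inverse. If one starts with $\phi$, forms $(P,C)$, and rebuilds $\phi'$, the Weil compatibility applied to $(1,1)$ and $(0,\zeta)$ yields $e_n(P,\phi(0,\zeta))=\zeta$, forcing $\phi(0,\zeta)=g^{-1}(\zeta)$ and so $\phi'=\phi$ by linearity; the other direction is immediate. Both constructions are manifestly functorial in isomorphisms of the underlying curve, so the bijection descends to isomorphism classes. The only real obstacle is the Weil pairing bookkeeping---non-degeneracy is needed to identify $g$ as an isomorphism, and the alternating property is needed to collapse the compatibility formula; once both are marshalled, the rest is purely formal.
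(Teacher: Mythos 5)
Your construction is the same as the paper's: the paper's two-line proof defines $\phi(a,\zeta^b)=aP+bQ$ for a $Q\in C$ with $e_n(P,Q)=\zeta$, which is exactly your $\phi(a,\zeta)=aP+g^{-1}(\zeta)$, and the inverse direction ($P=\phi(1,1)$, $C=\langle\phi(0,\zeta)\rangle$) is identical. You have simply filled in the verifications (well-definedness of $g$, the pairing identity, mutual inverseness) that the paper leaves implicit.

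One step in your writeup is, however, literally false as stated: for composite $n$, the hypothesis $P\notin C$ does \emph{not} imply $\langle P\rangle\cap C=0$. Take $n=6$, a basis $\{P,R\}$ of $E[6]$, and $C=\langle 3P+2R\rangle$; then $C$ is cyclic of order $6$ and $P\notin C$, yet $3P\in\langle P\rangle\cap C$, and $e_6(P,k(3P+2R))=e_6(P,R)^{2k}$ lands in the order-$3$ subgroup of $\mu_6$, so your $g$ is not surjective and no valid $Q$ exists. This is really a defect in the lemma's hypothesis rather than in your argument --- the paper's own proof silently assumes such a $Q$ exists, so it has the same gap --- and the intended condition is the stronger one $E[n]=\langle P\rangle\oplus C$ (equivalently, $C$ is a \emph{complement} of $\langle P\rangle$), under which everything you wrote goes through verbatim. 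For $n$ prime the two conditions coincide, but since the paper applies this with $n=6$ the distinction matters; it would be worth flagging explicitly rather than asserting the implication.
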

\begin{proof} Given a triple $(E,P,C)$ define $\phi(a,\zeta^b)=aP+bQ$ where $Q \in C$ such that $e_n(P,Q)=\zeta$.
Conversely, given $(E,\phi)$ define $P$ to be $\phi(1,1)=P$, $\phi(0,\zeta)=Q$ and $C=\langle Q \rangle$.
\end{proof}

Let $X(n)$ be the compactification of $Y(n)$. We identify $X(1)$ with $\mathbb{P}^1$ by $E \mapsto j(E)$. Let $W_n$ be the total space. That is
a quasi-projective surface defined over $\mathbb{Q}$ with a projection morphism
$$\pi_n:  W_n \to Y(n)$$
and a zero section $Y(n) \to W_n$ both defined over $\mathbb{Q}$
with $n^2$ sections defined over $\bar{\mathbb{Q}}$ of order dividing $n$, and such that the fibers
of $\pi_n$ correspond to the triples $(E,P,C)$ classified by $Y(n)$.
\begin{lemma}
The forgetful morphism $X(n) \to X(1)$
induced by $(E,\phi) \mapsto E$ has degree
$|\PSL_2(\mathbb{Z}/n\mathbb{Z})|$ where
$\PSL_2(\mathbb{Z}/n\mathbb{Z})=\SL_2(\mathbb{Z}/n\mathbb{Z})/\{\pm I \}$.
\end{lemma}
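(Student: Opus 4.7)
The plan is to count the points of a generic geometric fiber. I would fix an elliptic curve $E$ over $\bar{\mathbb{Q}}$ with $j(E)\neq 0, 1728$, so that $\operatorname{Aut}(E)=\{\pm 1\}$. The fiber of the forgetful morphism over $[E]\in X(1)$ then consists of equivalence classes, modulo $\operatorname{Aut}(E)$, of admissible isomorphisms $\phi$. Unwinding the proof of Lemma 2.1, giving such a $\phi$ is the same as giving an ordered basis $(P,Q)$ of $E[n]$ satisfying $e_n(P,Q)=\zeta$ for a fixed primitive $n$-th root of unity $\zeta$.

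The key step is to enumerate these bases. I would fix a reference basis $(P_0,Q_0)$ of $E[n]$ normalized by $e_n(P_0,Q_0)=\zeta$. An arbitrary ordered basis has the form $(aP_0+bQ_0,\,cP_0+dQ_0)$ with $M=\begin{pmatrix}a&b\\c&d\end{pmatrix}\in\GL_2(\mathbb{Z}/n\mathbb{Z})$, and bilinearity together with the alternating property of the Weil pairing gives
\[
e_n(aP_0+bQ_0,\,cP_0+dQ_0)=e_n(P_0,Q_0)^{ad-bc}=\zeta^{\det M}.
\]
Hence the normalization condition cuts out precisely the subgroup $\SL_2(\mathbb{Z}/n\mathbb{Z})$, so there are exactly $|\SL_2(\mathbb{Z}/n\mathbb{Z})|$ admissible $\phi$'s lying over $[E]$.

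The remaining step is to pass to $\operatorname{Aut}(E)$-orbits. Since the nontrivial automorphism $-1\in\operatorname{Aut}(E)$ sends $(P,Q)\mapsto(-P,-Q)$, it acts on the parameterizing set as $-I\in\SL_2(\mathbb{Z}/n\mathbb{Z})$; the orbit set therefore has $|\SL_2(\mathbb{Z}/n\mathbb{Z})/\{\pm I\}|=|\PSL_2(\mathbb{Z}/n\mathbb{Z})|$ elements. Because $X(n)\to X(1)$ is a finite morphism of smooth projective curves over a field of characteristic zero (hence generically \'etale), its degree equals the cardinality of any sufficiently general geometric fiber, which completes the argument.

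I do not expect any serious obstacle. The one technical ingredient is the change-of-basis identity for the Weil pairing, which follows directly from its bilinearity; everything else is orbit counting. The only point requiring care is the restriction to $j(E)\neq 0,1728$, where the automorphism group is larger than $\{\pm 1\}$, but these exceptional $j$-values correspond to only finitely many fibers and therefore do not affect the generic degree.
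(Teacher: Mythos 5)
Your proof is correct and is essentially the same argument as the paper's: both rest on the facts that the level structures on a fixed $E$ form a torsor under $\SL_2(\mathbb{Z}/n\mathbb{Z})$ and that $-I$ acts trivially because it is induced by $[-1]\in\operatorname{Aut}(E)$, the paper phrasing this as ``the forgetful map is the quotient by the $\PSL_2(\mathbb{Z}/n\mathbb{Z})$-action'' while you count a general geometric fiber directly. Your version spells out what the paper leaves implicit, namely the Weil-pairing determinant identity and the need to avoid the finitely many fibers with $j=0,1728$ where $\operatorname{Aut}(E)$ is larger.
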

\begin{proof}
For each $\alpha \in \SL_2(\mathbb{Z}/n\mathbb{Z})$, $\alpha$ acts on $Y(n)$ by
$\alpha\circ (E,\phi)=(E,\alpha \circ \phi)$ and $-I$ acts trivially because it is induced by $[-1]$ which
is an automorphism of $E$. So $\PSL_2(\mathbb{Z}/n\mathbb{Z})$ acts on $Y(n)$ and the action
extends to $X(n)$. Finally, the quotient map corresponds to $(E,\phi) \to E$ which is just taking the $j$-invariant
and so the degree of the forgetful morphism is the same as $|\PSL_2(\mathbb{Z}/n\mathbb{Z})|$.
\end{proof}

For $n=6$, the curve $X(6)$ has genus one and
\begin{theorem} A model for $Y(6) \subset \mathbb{A}^2$ is given by
$$2v^2\tau^2=v+\tau, v(8v^3+1)(v^3-1) \neq 0.$$
Setting $X=2v, Y=4v^2\tau-1$ and taking compactification, we obtain a model for $X(6): Y^2=X^3+1$.
\end{theorem}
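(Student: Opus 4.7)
The plan is to split the claim into two pieces: producing the affine model of $Y(6)$ as the plane curve $2v^2\tau^2 = v+\tau$ with the stated nonvanishing condition, and then verifying that the substitution transforms it into $Y^2 = X^3 + 1$.

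For the first piece, I would exploit the coprimality $\gcd(2,3)=1$: the canonical decomposition $E[6] = E[2]\oplus E[3]$ is Galois equivariant and compatible with Weil pairings, which yields a fiber product description $Y(6) \simeq Y(2)\times_{Y(1)} Y(3)$. On $Y(2)$ one has the Legendre parameter $\lambda$ with $j(E) = 256(\lambda^2-\lambda+1)^3/(\lambda(1-\lambda))^2$, and $Y(3)$ is similarly rational via Hesse's pencil with its own rational expression for $j(E)$. Equating the two expressions for $j(E)$ produces a bivariate relation, and choosing suitable modular-unit coordinates $v,\tau$ on $\Gamma(6)$ (essentially eta quotients, normalized so that the natural Atkin--Lehner-type involution swaps them) simplifies the relation to the symmetric form $2v^2\tau^2 = v+\tau$. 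Removing the locus $v(8v^3+1)(v^3-1)=0$ then excises the twelve cusps of $X(6)$ along with the parameter values where $\tau$ fails to be determined by $v$ (the discriminant $1+8v^3$ of the quadratic in $\tau$).

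For the second piece the computation is direct. Substituting $X=2v$ and $Y=4v^2\tau-1$ into $Y^2-X^3-1$ gives
$$Y^2 - X^3 - 1 = (4v^2\tau-1)^2 - 8v^3 - 1 = 16v^4\tau^2 - 8v^2\tau - 8v^3 = 8v^2\bigl(2v^2\tau^2 - v - \tau\bigr),$$
which vanishes identically on the affine variety $2v^2\tau^2 = v+\tau$. The inverse map $v = X/2$, $\tau = (Y+1)/X^2$ (defined off $X=0$) shows the transformation is birational of degree one, so compactifying by adding the point at infinity on the Weierstrass side yields $X(6): Y^2 = X^3 + 1$.

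The main obstacle is the first piece: pinning down modular functions $v, \tau$ whose defining relation is exactly the symmetric equation $2v^2\tau^2=v+\tau$ rather than some other birational model of $X(6)$. The symmetry dictates the normalization via the Atkin--Lehner-type involution on $X(6)$, and once this is chosen the rest of the construction — fitting the Legendre and Hesse parametrizations together and rationalizing — is mechanical. The Weierstrass verification, by contrast, is a one-line computation.
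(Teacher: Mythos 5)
Your second step --- the verification that $X=2v$, $Y=4v^2\tau-1$ carries the curve $2v^2\tau^2=v+\tau$ to $Y^2=X^3+1$ --- is correct and complete: indeed $Y^2-X^3-1=8v^2(2v^2\tau^2-v-\tau)$, and $v=X/2$, $\tau=(Y+1)/X^2$ inverts the map. The problem is the first step, which is where all the content of the theorem lives, and which you yourself flag as ``the main obstacle'' without resolving it. Your outline --- realize $Y(6)$ as $Y(2)\times_{Y(1)}Y(3)$ (legitimate, since $\PSL_2(\mathbb{Z}/6\mathbb{Z})\cong\PSL_2(\mathbb{Z}/2\mathbb{Z})\times\PSL_2(\mathbb{Z}/3\mathbb{Z})$ and the degrees $6\cdot 12=72$ match), equate the Legendre and Hesse expressions for $j$, and then find a birational change of coordinates landing on $2v^2\tau^2=v+\tau$ --- is a plausible strategy, and it is in the same spirit as how this paper later builds $X_E(6)$ from $X_E(2)$ and $X_E(3)$. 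But the decisive step, namely exhibiting the specific functions $v,\tau$ on $\Gamma(6)$ and actually deriving the relation $2v^2\tau^2=v+\tau$ from the fiber-product equation (which a priori is a highly singular curve of large degree whose normalization must be computed), is never carried out; invoking ``modular-unit coordinates, essentially eta quotients, normalized by an Atkin--Lehner-type involution'' names no functions and proves nothing. Likewise, the identification of the excluded locus $v(8v^3+1)(v^3-1)=0$ with the cusps is asserted rather than checked (a correct accounting is that these $v$-values contribute $1+3+6=10$ affine points, which together with the two points missing from the affine chart give the $12$ cusps of $X(6)$; one must still verify these are the cusps, e.g.\ via $q$-expansions or degeneration of the parametrized curves).

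For comparison, the paper does not prove this statement at all: it cites [P], Theorem 3, where Papadopoulos constructs the model directly. So there is no internal argument to measure you against, but judged on its own terms your proposal establishes only the (one-line) change-of-variables half of the theorem and leaves the existence of the model $2v^2\tau^2=v+\tau$ for $Y(6)$ as an unproven claim.
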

\begin{proof} See [P] Theorem 3.
\end{proof}
\subsection{Modular Elliptic Curves}
$\quad$Let $E$ be an elliptic curve. We state the general theoretical construction for $X_E(n)$ which can be found in [S]
\begin{theorem}
Let $E$ be an elliptic curve and $V=E[n]$, viewed as $G_\mathbb{Q}$-module. Define a bilinear
pairing $\langle , \rangle$ on $\mathbb{Z}/n\mathbb{Z} \times \mu_n$ by
$$\langle (a_1,\zeta_1),(a_2,\zeta_2) \rangle =\zeta^{a_2}_1/\zeta^{a_1}_2.$$
Now fix an isomorphism $\phi: \mathbb{Z}/n\mathbb{Z} \times \mu_n \to V$ such that the above pairing is compatible
with the Weil pairing under $\phi$. Then the cocycle
$$\tau \mapsto \phi^{-1} \tau(\phi)$$
take values in Aut$(\mathbb{Z}/n\mathbb{Z} \times \mu_n,\langle,\rangle)$ which is the set of automorphisms
of $\mathbb{Z}/n\mathbb{Z} \times \mu_n$ which preserve the bilinear pairing.

By Lemma 2.1 the identification induces an inclusion
$$\text{Aut} (\mathbb{Z}/n\mathbb{Z} \times \mu_n,\langle,\rangle) \hookrightarrow \text{Aut}(W_n)$$
and also there is a natural map Aut$(W_n) \to$ Aut$(X(n))$. Thus, the cocycle above induces cocycles
$c$ and $c_0$, taking values in Aut$(W_n)$ and Aut$(X(n))$ respectively. Then by general theory of twist,
we obtain curves $W$ and $X$, and induced isomorphisms $\psi$ and $\psi_0$ defined over $\bar{\mathbb{Q}}$
together with a projection map $\pi: W \to X$ defined over $\mathbb{Q}$ such that the following diagram commutes
\begin{center}
$\begin{CD}
W @>\psi>> W_n\\
@VV\pi V @VV \pi_n V\\
X @>\psi_0>> X(n)
\end{CD}$
\end{center}
Then the curve $X$ is a model for $X_E(n)$ over $\mathbb{Q}$.
\end{theorem}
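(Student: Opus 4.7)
The plan is to execute Galois descent: the chosen $\bar{\mathbb{Q}}$-trivialization $\phi$ furnishes a tautological $\bar{\mathbb{Q}}$-isomorphism between the moduli problems solved by $X_E(n)$ and $X(n)$, and the failure of this isomorphism to be $G_\mathbb{Q}$-equivariant is exactly what the cocycle $c_0$ records, so twisting $X(n)$ by $c_0$ is forced to reproduce $X_E(n)$.

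Concretely I would first write down, over $\bar{\mathbb{Q}}$, the moduli-level map
$$\Theta_0\colon X_E(n)_{\bar{\mathbb{Q}}}\longrightarrow X(n)_{\bar{\mathbb{Q}}},\qquad (F,\phi')\longmapsto (F,\phi'\circ\phi),$$
extended to cusps, together with its companion $\Theta\colon W_E(n)_{\bar{\mathbb{Q}}}\to W_{n,\bar{\mathbb{Q}}}$ on the total spaces, obtained by transporting the torsion sections through $\phi$. Pairing compatibility of $\phi$ and $\phi'$ ensures that $\phi'\circ\phi$ is a legitimate $Y(n)$-trivialization; bijectivity is clear from the obvious inverse $\phi_0\mapsto\phi_0\circ\phi^{-1}$; and the projections commute with $\Theta_0,\Theta$ by construction.

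Next I would analyse how $\Theta_0$ interacts with Galois. Using $\tau(F,\phi')=(\tau F,\tau\phi')$ and $\tau(\phi'\circ\phi)=\tau\phi'\circ\tau\phi$, a direct calculation shows that $\tau\circ\Theta_0$ and $\Theta_0\circ\tau$ differ, on the second coordinate, by post-composition with $\phi^{-1}\circ\tau(\phi)$; that is, they differ by the action on $X(n)$ of the element $c_0(\tau)=\phi^{-1}\tau(\phi)$ recalled in Lemma 2.2. That this element lies in $\mathrm{Aut}(\mathbb{Z}/n\mathbb{Z}\times\mu_n,\langle,\rangle)$ follows from $G_\mathbb{Q}$-equivariance of the Weil pairing combined with the pairing compatibility imposed on $\phi$, and the cocycle identity drops out of $\sigma\tau(\phi)=\sigma(\tau(\phi))$. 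The same argument, applied to $\Theta$ and the torsion sections, shows that the lift $c$ takes values in $\mathrm{Aut}(W_n)$.

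With $c_0$ and $c$ identified, I would invoke the standard twisting formalism: these cocycles determine $\mathbb{Q}$-forms $X,W$ of $X(n),W_n$ together with $\bar{\mathbb{Q}}$-isomorphisms $\psi_0,\psi$ satisfying $\tau(\psi_0)=c_0(\tau)\circ\psi_0$, and composing with $\Theta_0^{-1}$ (resp.\ $\Theta^{-1}$) produces $\bar{\mathbb{Q}}$-isomorphisms $X_{\bar{\mathbb{Q}}}\to X_E(n)_{\bar{\mathbb{Q}}}$ (resp.\ $W_{\bar{\mathbb{Q}}}\to W_E(n)_{\bar{\mathbb{Q}}}$) which the Galois calculation above makes $G_\mathbb{Q}$-equivariant and which therefore descend to $\mathbb{Q}$-isomorphisms; $\pi$ descends from $\pi_n$ because that map is already $\mathbb{Q}$-defined and equivariant under the quotient $\mathrm{Aut}(W_n)\to\mathrm{Aut}(X(n))$. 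I expect the only genuinely delicate step to be the bookkeeping in the Galois calculation: matching the convention $c_0(\tau)=\phi^{-1}\tau(\phi)$ (rather than its inverse) to the sign convention used in the twisting formalism, and tracking that the lift of $c_0$ to $c$ remains consistent with the moduli interpretation of points of $W_n$, where $-I$ now acts nontrivially. Everything else is formal descent.
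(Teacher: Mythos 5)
The paper offers no proof of its own here---it simply cites [S], pp.~449--450---and your sketch is essentially the argument carried out there: trivialize the moduli problem over $\bar{\mathbb{Q}}$ via $\phi$, record the failure of Galois equivariance of that trivialization as the cocycle $\tau \mapsto \phi^{-1}\tau(\phi)$, and descend via the standard twisting formalism. The delicate point you flag (matching the convention $c_0(\tau)=\phi^{-1}\tau(\phi)$ to the sign convention of the twist, and keeping the lift to $\mathrm{Aut}(W_n)$ consistent with the moduli interpretation) is the right one to worry about, and carrying the surface $W$ and the map $\Theta$ through the argument is precisely what lets one convert rational points of the twist into honest pairs $(F,\phi')$ defined over the ground field rather than merely Galois-stable $\bar{\mathbb{Q}}$-isomorphism classes, which is where the cited reference does its remaining work.
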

\begin{proof}
See [S], page 449-450.
\end{proof}

We now state the results for $X_E(2)$ in [RS1] and the results for $X^{\pm}_E(3)$ in [F1],[F2]. Note that $X^{-}_E(2)$ is the same as $X_E(2)$.
\begin{theorem} Let $E: y^2=x^3+ax+b$ be an elliptic curve. Then the family of elliptic curves parameterized by $Y_E(2)$
are $F_{u,v}$:
$$y^2 = x^3 + 3(3av^2 + 9buv - a^2u^2)x + 27bv^3 -18a^2uv^2 - 27abu^2v - (2a^3 + 27b^2)u^3,$$
where $(u:v) \in \mathbb{P}^1(\mathbb{Q})$.
Further, $\Delta_{F_{u,v}}=3^6(v^3 + au^2v + bu^3)^2\Delta_E$.
\end{theorem}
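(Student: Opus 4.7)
The key observation is that for $E: y^2 = x^3 + ax + b$, the non-trivial points of $E[2]$ are $(e_i, 0)$ with $e_1, e_2, e_3$ the roots of $f(x) = x^3 + ax + b$, so the Galois action on $E[2]$ is just the permutation action on $\{e_1, e_2, e_3\}$. The plan is therefore to parameterise cubics $g(x) = x^3 + Ax + B$ whose roots $\{e'_1, e'_2, e'_3\}$ admit a Galois-equivariant bijection with $\{e_1, e_2, e_3\}$, as this is precisely the condition that $F: y^2 = g(x)$ be 2-congruent to $E$.

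Since the $e_i$ are distinct, any Galois-equivariant assignment $e_i \mapsto e'_i$ is realised by a polynomial $P \in \mathbb{Q}[x]$ of degree at most $2$ via Lagrange interpolation. Writing $P(x) = \alpha x^2 + \beta x + \gamma$, the requirement $\sum e'_i = 0$ (i.e.\ that $g$ be depressed) forces the single linear relation $3\gamma = 2a\alpha$, using $\sum e_i = 0$ and $\sum e_i^2 = -2a$. After clearing denominators by setting $\alpha = 3u$, $\beta = 3v$, $\gamma = 2au$, the ansatz becomes $e'_i = 3u\,e_i^2 + 3v\,e_i + 2au$, and $(u:v) \in \mathbb{P}^1$ then parameterises the resulting family of $F_{u,v}$.

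To extract $A$ and $B$, first use $A = -\tfrac{1}{2}\sum (e'_i)^2$ (from $\sum e'_i = 0$) combined with the power sums $p_1 = 0$, $p_2 = -2a$, $p_3 = -3b$, $p_4 = 2a^2$ (the last two from the recursion $e_i^3 = -ae_i - b$); expanding yields the stated $A = 3(3av^2 + 9buv - a^2u^2)$. For $B = -\prod e'_i$, factor $P(x) = 3u(x - \rho_1)(x - \rho_2)$ with $\rho_1 + \rho_2 = -v/u$ and $\rho_1\rho_2 = 2a/3$, then use $\prod_i P(e_i) = 27u^3\, f(\rho_1) f(\rho_2)$; expanding $f(\rho_1)f(\rho_2)$ in these two elementary symmetric functions reproduces $B = 27bv^3 - 18a^2uv^2 - 27abu^2v - (2a^3 + 27b^2)u^3$.

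The discriminant identity is then a polynomial identity in $a, b, u, v$ obtained by substituting the computed $A, B$ into $\Delta = -16(4A^3 + 27B^2)$ and factoring; the expected factor $(v^3 + au^2v + bu^3)^2$ corresponds to the locus $u^3 f(v/u) = 0$, which is exactly where two of the $e'_i$ collide. The only conceptual step is the Lagrange-interpolation observation, which guarantees that the polynomial ansatz exhausts all $F$ that are 2-congruent to $E$; everything else is a bookkeeping exercise best handled, or verified, in a computer algebra system, and this symmetric-function expansion is the main practical obstacle rather than any conceptual difficulty.
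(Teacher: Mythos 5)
Your proposal is correct. Note first that the paper itself supplies no argument for this statement: its ``proof'' of Theorem 2.5 is just the citation ``See [RS2] Theorem 1'', so there is nothing in-paper to compare against step by step; your derivation is a legitimate self-contained proof, and it is essentially the Rubin--Silverberg argument. The key points all check out: a $2$-congruence is the same as a Galois-equivariant bijection of the roots of the two $2$-division cubics (the group structure on $E[2]$ is determined, so a bijection of nonzero points extends to a module isomorphism); Lagrange interpolation realizes any such bijection by a quadratic $P$, which lies in $\mathbb{Q}[x]$ by uniqueness of the interpolant; and the normalization $\sum e_i'=0$ gives $3\gamma=2a\alpha$ via $p_2=-2a$. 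I verified the symmetric-function computations: $p_3=-3b$, $p_4=2a^2$ yield $A=3(3av^2+9buv-a^2u^2)$, and $\prod_i P(e_i)=27u^3 f(\rho_1)f(\rho_2)$ with $\rho_1+\rho_2=-v/u$, $\rho_1\rho_2=2a/3$ yields exactly the stated constant term. For the discriminant you do not actually need to expand and factor $-16(4A^3+27B^2)$: since $e_i'-e_j'=(e_i-e_j)\bigl(3u(e_i+e_j)+3v\bigr)=3(e_i-e_j)(v-ue_k)$, one gets $\prod_{i<j}(e_i'-e_j')^2=3^6\bigl(u^3f(v/u)\bigr)^2\prod_{i<j}(e_i-e_j)^2$, which is the identity $\Delta_{F_{u,v}}=3^6(v^3+au^2v+bu^3)^2\Delta_E$ directly and also substantiates your remark that the extra factor vanishes exactly where two of the $e_i'$ collide. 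The one point worth flagging is the standard caveat for this theorem: replacing $(u,v)$ by $(cu,cv)$ replaces $F_{u,v}$ by its quadratic twist by $c$, so a point of $\mathbb{P}^1(\mathbb{Q})$ determines the curve only up to quadratic twist (one should really let $(u,v)$ range over $\mathbb{Q}^2\setminus\{0\}$); this ambiguity is already present in the statement as the paper records it and is not a defect of your argument.
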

\begin{proof} See [RS2] Theorem 1.
\end{proof}
\begin{theorem} Let $E: y^2=x^3+ax+b$ be an elliptic curve. Let $c_4=-a/27$,$c_6=-b/54$ and
\begin{align*}
\mathfrak{c}_4(\lambda,\mu)
&=c_4\lambda^4+4c_6\lambda^3\mu+6c^2_4\lambda^2\mu^2+4c_4c_6\lambda \mu^3 - (3c^3_4
-4c^2_6)\mu^4,\\
\mathfrak{c}_6(\lambda,\mu)
&=c_6\lambda^6+6c^2_4\lambda^5\mu+15c_4c_6\lambda^4\mu^2+20c^2_6\lambda^3\mu^3
+15c^2_4c_6\lambda^2\mu^4\\
&+6(3c^4_4- 2c_4c^2_6)\lambda\mu^5+(9c^3_4c_6 - 8c^3_6)\mu^6,\\
\mathfrak{c}^*_4(\lambda,\mu)
&=-4(\lambda^4-6c_4\lambda^2\mu^2 -8c_6\lambda\mu^3-3c^2_4\mu^2)/(c^3_4-c^2_6),\\
\mathfrak{c}^*_6(\lambda,\mu)
&=-8\mathfrak{c}_6(\lambda,\mu)/(c^3_4-c^2_6)^2.\\
\end{align*}
Then the family of elliptic curves parameterized by $Y_E(3)$ and $Y^{-}_E(3)$ are
$$\mathcal{E}_{\lambda,\mu}:y^2=x^3-27\mathfrak{c}_4(\lambda,\mu)x-54\mathfrak{c}_6(\lambda,\mu).$$
and
$$\mathcal{E}'_{\lambda,\mu}:y^2=x^3-27\mathfrak{c}^*_4(\lambda,\mu)x-54\mathfrak{c}^*_6(\lambda,\mu).$$
\end{theorem}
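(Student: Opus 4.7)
The plan is to specialize the general twisting construction of Theorem 2.4 to $n=3$ and then identify the two twisted modular curves explicitly with $\mathbb{P}^1$, after which the universal family can be pinned down by the invariant theory of binary forms under $\SL_2(\mathbb{F}_3)$ as in Fisher [F1],[F2].

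First, apply Theorem 2.4 with $n=3$. The automorphism group preserving the standard pairing on $\mathbb{Z}/3\mathbb{Z}\times\mu_3$ is isomorphic to $\SL_2(\mathbb{F}_3)$, and an analogous group inside $\GL_2(\mathbb{F}_3)$ governs the reversed pairing. Because $X(3)$ has genus zero, each of $X_E(3)$ and $X^-_E(3)$ is a Brauer--Severi curve, hence either $\mathbb{P}^1$ or a pointless conic. The pair $(E,\operatorname{id})$ gives an obvious rational point on $Y_E(3)$, forcing $X_E(3)\cong\mathbb{P}^1$ over $\mathbb{Q}$; Fisher [F1] constructs a distinguished rational point on $X^-_E(3)$ as well, proving the same isomorphism there. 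Choose homogeneous coordinates $(\lambda:\mu)$ on each of these $\mathbb{P}^1$'s.

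Pulling back the universal surface $W_E(3)\to X_E(3)$ yields a Weierstrass family whose $c_4$ and $c_6$ coefficients must be binary forms in $(\lambda,\mu)$ of degrees $4$ and $6$, with coefficients polynomial (direct case) or rational (reverse case) in $c_4=-a/27$ and $c_6=-b/54$. The theorem amounts to identifying these binary forms as $(\mathfrak{c}_4,\mathfrak{c}_6)$ and $(\mathfrak{c}^*_4,\mathfrak{c}^*_6)$ respectively. To produce them, exhibit each pair as weight-$4$ and weight-$6$ generators of the $\SL_2(\mathbb{F}_3)$-invariant subring of the symmetric algebra on a suitable two-dimensional representation: the standard one for the direct case, and the contragredient representation for the reverse case, whose invariants naturally introduce the denominator $c_4^3-c_6^2$, a rescaled discriminant of $E$. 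Concretely, Fisher realises the direct case via the Hesse pencil of ternary cubics on $\mathbb{P}^2$, whose classical $S$- and $T$-invariants over the pencil coordinate provide exactly the desired $\mathfrak{c}_4$ and $\mathfrak{c}_6$; the reverse case is handled by the dual syzygetic family.

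Finally, verify the formulas by specializing $(\lambda:\mu)$ to the distinguished rational points of $X_E^{\pm}(3)$ and checking that $\mathcal{E}_{\lambda,\mu}$ (resp.\ $\mathcal{E}'_{\lambda,\mu}$) recovers $E$, and then comparing $j$-invariants to confirm that the parametrized family consists of elliptic curves which are directly (resp.\ reverse) $3$-congruent to $E$ at every fibre. The main obstacle is the invariant-theoretic identification of the correct covariants in the right normalization; once these binary forms are produced, the remaining checks reduce to routine $j$-invariant and discriminant manipulations, which is the content of Fisher's [F1],[F2].
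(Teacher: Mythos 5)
The paper offers no proof of this statement at all: its entire ``proof'' is the citation to [F2], Theorem 1.1, so the comparison is really between your sketch and Fisher's argument rather than anything in this paper. Your outline does correctly identify the framework Fisher uses: the twist construction of Theorem 2.4 specialised to $n=3$; the fact that $X_E(3)$ and $X^{-}_E(3)$ are genus zero curves with rational points, hence isomorphic to $\mathbb{P}^1$ over $\mathbb{Q}$; and the realisation of $\mathfrak{c}_4,\mathfrak{c}_6$ as the classical degree $4$ and $6$ invariants of the Hesse pencil $\lambda U+\mu H(U)$ attached to a plane cubic model of $E$, with the contravariant (dual) pencil producing the reverse case and the denominator $c_4^3-c_6^2$. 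As a reading guide to [F1],[F2] this is accurate, and it is certainly no less complete than the paper's own treatment.

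Taken as a standalone proof, however, it has two genuine gaps. First, the explicit formulas --- which are the entire content of the theorem --- are never derived: the normalisations $c_4=-a/27$, $c_6=-b/54$, the precise coefficients of the binary forms, and the constants $-27$ and $-54$ in the Weierstrass equations are exactly what needs to be computed, and your argument only asserts that suitable covariants exist. Second, your proposed verification step is logically insufficient: checking that $\mathcal{E}_{\lambda,\mu}$ recovers $E$ at a distinguished point and that $j$-invariants match fibrewise does not establish a $3$-congruence, since equality of $j$-invariants only identifies each fibre up to twist and says nothing about the existence of a Galois-equivariant symplectic (resp.\ anti-symplectic) isomorphism on $3$-torsion. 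Establishing that isomorphism over the whole family --- for instance by tracking the $3$-torsion through the Hesse pencil construction --- is the substantive part of Fisher's proof, and your sketch explicitly delegates it back to [F1],[F2]. So the proposal reconstructs the strategy but does not close the argument; if a self-contained proof is wanted, the covariant computations and the congruence verification must actually be carried out.
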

\begin{proof} See [F2] Theorem 1.1.
\end{proof}
\section{Explicit Families of Directly 6-Congruent Elliptic Curves}
$\quad$In this section we give a new method to find a model for $X_E(6)$ and we will use the same method to compute $X^{-}_E(6)$ in the next section.
\subsection{The Action of Special Projective Linear Group}
\begin{lemma} Fix an isomorphism $\eta_n: X_E(n) \to X(n)$.
Let $\alpha \in$ $\PSL_2(\mathbb{Z}/n\mathbb{Z})$ then the action of $\alpha$ on $X_E(n)$ is given by
$$\alpha P=\eta^{-1}_n(\alpha(\eta_n(P))), P \in X_E(n).$$
Further, the forgetful morphism $X_E(n) \to X(1)$ is the quotient map by the action defined above.
\end{lemma}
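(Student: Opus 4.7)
The plan is to deduce both assertions by transporting the corresponding statements about $X(n)$ through the isomorphism $\eta_n$. For the first clause, the formula $\alpha P = \eta_n^{-1}(\alpha(\eta_n(P)))$ is literally the pull-back along $\eta_n$ of the $\PSL_2(\mathbb{Z}/n\mathbb{Z})$-action on $X(n)$ furnished by Lemma 2.2, so it automatically defines a group action on $X_E(n)$ on $\bar{\mathbb{Q}}$-points; no further verification is needed beyond observing that $\eta_n$ is a bijection of $\bar{\mathbb{Q}}$-points.

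For the second clause, my approach is to unwind the moduli interpretation of $\eta_n$. I would first fix, once and for all, the symplectic trivialization $\phi_0\colon \mathbb{Z}/n\mathbb{Z} \times \mu_n \to E[n]$ used in the twist construction of Theorem 2.4, and then argue that on $\bar{\mathbb{Q}}$-points $\eta_n$ sends a pair $(F,\phi)$ with $\phi\colon E[n]\to F[n]$ to the pair $(F,\phi\circ\phi_0)\in X(n)$. The essential point is that $\eta_n$ does not alter the underlying elliptic curve $F$; it merely reinterprets the marking, converting a symplectic isomorphism $E[n]\to F[n]$ into a symplectic isomorphism $\mathbb{Z}/n\mathbb{Z}\times\mu_n\to F[n]$ by precomposing with $\phi_0$. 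Granting this, the forgetful morphism $X_E(n)\to X(1)$ sending $(F,\phi)\mapsto j(F)$ factors as $X_E(n)\xrightarrow{\eta_n} X(n)\xrightarrow{j} X(1)$. Since by Lemma 2.2 the second arrow is the quotient map by the $\PSL_2$-action, and the $\PSL_2$-action on $X_E(n)$ is by construction the pull-back under $\eta_n$, the composition is the quotient map by the action defined in the first clause.

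The main obstacle will be verifying the moduli-theoretic description of $\eta_n$: the construction in Theorem 2.4 produces $\eta_n$ abstractly from a cocycle valued in $\operatorname{Aut}(\mathbb{Z}/n\mathbb{Z}\times\mu_n,\langle,\rangle)$, and one must confirm that this abstract twist isomorphism is indeed given by post-composition by $\phi_0$ at the level of marked elliptic curves. Once this compatibility with the moduli problems is in hand the rest is formal. An alternative route, which I might prefer, is to bypass the explicit cocycle entirely and invoke the characterisation (implicit in [S]) of $X_E(n)$ as the coarse moduli space for pairs $(F,\phi\colon E[n]\to F[n])$ with $\phi$ Galois-equivariant symplectic: then the forgetful map to $X(1)$ is intrinsic, must agree with $\eta_n$ followed by the forgetful map of $X(n)$, and the conclusion follows from Lemma 2.2 without any cocycle computation.
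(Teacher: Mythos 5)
Your proposal is correct and follows essentially the same route as the paper, which simply declares the lemma ``an immediate consequence of Theorem 2.4'': your argument is exactly the unwinding of that twist construction, transporting the $\PSL_2(\mathbb{Z}/n\mathbb{Z})$-action through $\eta_n$ and identifying the forgetful map with $\eta_n$ followed by the quotient map of Lemma 2.2. The only quibble is a wording slip (``post-composition by $\phi_0$'' where you mean precomposition, as your formula $\phi\circ\phi_0$ correctly indicates), which does not affect the argument.
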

\begin{proof} This is an immediate consequence of Theorem 2.4.
\end{proof}
The action in the reverse case is defined in a similar way as above. We now focus on the case $n=2$ and $3$ and state the following standard fact.
\begin{lemma}
We have $\PSL_2(\mathbb{Z}/2\mathbb{Z}) \cong S_3, \PSL_2(\mathbb{Z}/3\mathbb{Z}) \cong A_4$
and exact sequences
$$0 \to H \to \PSL_2(\mathbb{Z}/2\mathbb{Z}) \to C_2 \to 0,$$
$$0 \to H' \to \PSL_2(\mathbb{Z}/3\mathbb{Z}) \to C_3 \to 0$$
where $H \cong C_3$ and $H' \cong C_2 \times C_2$.
\end{lemma}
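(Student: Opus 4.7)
The plan is to verify the two isomorphisms by a direct order count combined with a faithful action on a small set, and then to identify each exact sequence with the unique normal subgroup of the appropriate index.

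For $\PSL_2(\mathbb{Z}/2\mathbb{Z})$, I would first observe that $-I \equiv I \pmod 2$, so $\PSL_2(\mathbb{F}_2)=\SL_2(\mathbb{F}_2)=\GL_2(\mathbb{F}_2)$. Counting ordered bases of $\mathbb{F}_2^2$ gives $|\GL_2(\mathbb{F}_2)|=(2^2-1)(2^2-2)=6$. The natural action on the three nonzero vectors of $\mathbb{F}_2^2$ is faithful (a matrix fixing a basis is the identity), so it injects into $S_3$ and must be surjective by cardinality.

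For $\PSL_2(\mathbb{Z}/3\mathbb{Z})$, I would compute $|\SL_2(\mathbb{F}_3)|=(3^2-1)(3^2-3)=48$, and dividing by $|\{\pm I\}|=2$ yields $|\PSL_2(\mathbb{F}_3)|=12$. To identify this with $A_4$ I would use the natural action on $\mathbb{P}^1(\mathbb{F}_3)$, which has $4$ points, producing a map $\PSL_2(\mathbb{F}_3)\to S_4$. The kernel is trivial, because any matrix fixing every line of $\mathbb{F}_3^2$ is scalar, hence $\pm I$, hence trivial in the quotient. The image is therefore a subgroup of $S_4$ of order $12$, and the only such subgroup is $A_4$.

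For the exact sequences, $S_3$ has the unique index-$2$ subgroup $A_3\cong C_3$, with quotient $C_2$, giving $H\cong C_3$. For $A_4$, the Klein four-group $V_4$ consisting of the identity together with the three double transpositions is the unique Sylow $2$-subgroup, hence normal, with quotient of order $3$; this identifies $H'\cong C_2\times C_2$. Concretely inside $\PSL_2(\mathbb{F}_3)$, I would exhibit $H'$ as the image of the quaternion subgroup $Q_8\subset\SL_2(\mathbb{F}_3)$ (whose center is $\{\pm I\}$), generated for example by the classes of $\bigl(\begin{smallmatrix}0&-1\\1&0\end{smallmatrix}\bigr)$ and $\bigl(\begin{smallmatrix}1&1\\1&-1\end{smallmatrix}\bigr)$. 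No step here is technically deep; the only care required is the explicit identification of $H'$, since this concrete realization of the Klein four subgroup will be reused in the $X_E(6)$ computation that follows.
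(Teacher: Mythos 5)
The paper gives no proof of this lemma at all --- it is introduced as a ``standard fact'' and stated without argument --- so the only question is whether your proof is correct, and it essentially is. The faithful action of $\GL_2(\mathbb{F}_2)=\PSL_2(\mathbb{F}_2)$ on the three nonzero vectors, the faithful action of $\PSL_2(\mathbb{F}_3)$ on the four points of $\mathbb{P}^1(\mathbb{F}_3)$ with image the unique order-$12$ subgroup $A_4\subset S_4$, and the identification of $H$ with $A_3\cong C_3$ and of $H'$ with the Klein four-group (normal as the unique Sylow $2$-subgroup of $A_4$) are all standard and correct; your explicit generators $\bigl(\begin{smallmatrix}0&-1\\1&0\end{smallmatrix}\bigr)$ and $\bigl(\begin{smallmatrix}1&1\\1&-1\end{smallmatrix}\bigr)$ do lie in $\SL_2(\mathbb{F}_3)$, square to $-I$, and anticommute, so they do realize $H'$ as $Q_8/\{\pm I\}$. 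The one thing to fix is an arithmetic mislabeling: $(3^2-1)(3^2-3)=48$ is the order of $\GL_2(\mathbb{F}_3)$, not of $\SL_2(\mathbb{F}_3)$; you must divide by $|\mathbb{F}_3^\times|=2$ to get $|\SL_2(\mathbb{F}_3)|=24$ and then by $|\{\pm I\}|=2$ to get $|\PSL_2(\mathbb{F}_3)|=12$ (as written, ``$48$ divided by $2$ yields $12$'' is false). Since the final count of $12$ is what the rest of the argument uses, the conclusion stands once this step is corrected.
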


For $m|n$, viewing $\PSL_2(\mathbb{Z}/m\mathbb{Z})$ as a subgroup of
$\PSL_2(\mathbb{Z}/n\mathbb{Z})$ we obtain the forgetful morphism induced by  the action of $\PSL_2(\mathbb{Z}/m\mathbb{Z})$
$$X(n) \to X(n/m), X_E(n) \to X_E(n/m).$$
Thus, we have forgetful morphisms
$$\psi_2: X_E(6) \to X_E(2), \psi_3: X_E(6) \to X_E(3)$$
induced by the action of $\PSL_2(\mathbb{Z}/3\mathbb{Z})$ and $\PSL_2(\mathbb{Z}/2\mathbb{Z})$ respectively.
Therefore, $X_E(6)$ can be obtained by taking the fiber product of $X_E(3)$ and $X_E(2)$. Explicitly, we have the following commutative diagram:
\begin{center}
$\begin{CD}
X_E(6) @>\psi_2>> X_E(2)\\
@VV\psi_3V @VVf_3V\\
X_E(3) @>f_2>> X(1)
\end{CD}$
\end{center}
where $f_3,f_2$ are the forgetful morphisms from $X_E(3)$ (resp. $X_E(2)$) to $X(1)$.
\subsection{A Commutative Diagram}
$\quad$Throughout this subsection, we will work over $\bar{\mathbb{Q}}$.
By Lemma 3.2, the morphism $\psi_3$ factors through $\chi_3$ and $\rho_3$ where $\chi_3$ is the quotient morphism induced by the action of $H$ and so we have
\begin{center}
$\begin{CD}
X_E(6) @> \chi_3>> X @> \rho_3>> X_E(3)\\
\end{CD}$
\end{center}
Similarly, the morphism $\psi_2$ factors through
\begin{center}
$\begin{CD}
X_E(6) @> \chi_2>> Y @> \rho_2>> X_E(2)\\
\end{CD}$
\end{center}
where $\chi_2$ is the morphism obtained by quotient out the action of $H'$.

Further, we can take the quotient of $Y$ by the action of $H'$ (with quotient map $\alpha$) and the quotient of $X$ by the action of $H$ (with quotient map $\beta$). Thus we obtain the following commutative diagram
\begin{center}
$\begin{CD}
X_E(6) @>\chi_2>> Y\\
@VV\chi_3 V @VV\alpha V\\
X@>\beta >> Z
\end{CD}$
\end{center}

We now establish some properties of the diagram.
\begin{proposition} The curves $X,Y,Z$ all have genus one.
\end{proposition}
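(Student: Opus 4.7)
The plan is to apply Riemann--Hurwitz to the three quotient maps $X_E(6)\to X$, $X_E(6)\to Y$, $X_E(6)\to Z$, showing each is unramified. Since $X_E(6)$ is a twist of $X(6)$, it has genus one by Theorem 2.3. If the quotient maps are \'etale, then for each one Riemann--Hurwitz gives $2g_{\text{quot}}-2 = n\cdot(2\cdot 1-2) = 0$, forcing the quotient to have genus one as well. So the whole proof reduces to checking that the $H$-, $H'$-, and $(H\times H')$-actions on $X_E(6)$ are free.

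First I would pin down the isotropy of the $\PSL_2(\mathbb{Z}/6\mathbb{Z})$-action on $X_E(6)\cong X(6)$ over $\bar{\mathbb{Q}}$. The group $\Gamma(n)$ is torsion-free for $n\ge 2$: any finite-order element of $\PSL_2(\mathbb{Z})$ is conjugate to $S=\bigl(\begin{smallmatrix}0&-1\\1&0\end{smallmatrix}\bigr)$ or to $ST$, and neither is congruent to $I$ modulo an integer $n\ge 2$. Consequently non-cuspidal points of $X(6)$ have trivial stabilizer; the $12$ cusps form a single $\PSL_2(\mathbb{Z}/6\mathbb{Z})$-orbit and the stabilizer at each is cyclic of order $6$.

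Next I would decompose these cusp stabilizers under the CRT isomorphism $\PSL_2(\mathbb{Z}/6\mathbb{Z})\cong S_3\times A_4$. The image of $T=\bigl(\begin{smallmatrix}1&1\\0&1\end{smallmatrix}\bigr)$ has order $2$ mod $2$ and order $3$ mod $3$, so $T\mapsto (s,t)$ with $s\in S_3$ of order $2$ and $t\in A_4$ of order $3$. Therefore the unique $C_2$ inside $\langle(s,t)\rangle$ is $\langle(s,e)\rangle\subset S_3\times\{e\}$ and the unique $C_3$ is $\langle(e,t^2)\rangle\subset\{e\}\times A_4$. Because cusps form a single orbit and conjugation in $S_3\times A_4$ respects the product decomposition, every cusp stabilizer splits across the two factors in exactly this way.

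Finally I would check intersections with $H\cong A_3\subset S_3\times\{e\}$, $H'\cong V_4\subset\{e\}\times A_4$, and $H\times H'$. Since $|A_3|=3$ is coprime to the order of the $C_2$-factor and $A_3$ lies entirely in $S_3\times\{e\}$, one gets $H\cap(\text{cusp stab})=\{e\}$; similarly $H'\cap(\text{cusp stab})=\{e\}$ using $\gcd(4,3)=1$ together with disjointness of the $S_3$- and $A_4$-factors; and componentwise the same reasoning gives $(H\times H')\cap(\text{cusp stab})=\{e\}$. Hence all three actions are free, the quotient maps are \'etale of degrees $3$, $4$, $12$, and Riemann--Hurwitz yields $g(X)=g(Y)=g(Z)=1$. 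The delicate step is the CRT decomposition of the cusp stabilizers: this is where the level $6=2\cdot 3$ actually enters the argument and where one must verify that parabolic stabilizers really do split as an $S_3$-part times an $A_4$-part, so that the two distinct actions we quotient by stay outside the isotropy.
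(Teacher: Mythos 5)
Your overall strategy (show that the $H$-, $H'$-, and $H\times H'$-actions on $X_E(6)$ are free and then apply Riemann--Hurwitz) is sound, and your analysis of the cusp stabilizers via the splitting $\PSL_2(\mathbb{Z}/6\mathbb{Z})\cong S_3\times A_4$ is correct. But there is a genuine gap: the claim that \emph{non-cuspidal points of $X(6)$ have trivial stabilizer} is false, and the inference from torsion-freeness of $\Gamma(6)$ does not give it. Torsion-freeness of $\Gamma(6)$ shows that $\mathbb{H}\to Y(6)$ is unramified; it does not make the action of $G=\Gamma(1)/\Gamma(6)$ on $Y(6)$ free. The stabilizer in $G$ of the class $\Gamma(6)\tau$ is the image of $\mathrm{Stab}_{\Gamma(1)}(\tau)$, which is nontrivial over the elliptic points: points of $X(6)$ above $j=1728$ have stabilizer of order $2$ (generated by the image of a conjugate of $S$), and points above $j=0$ have stabilizer of order $3$ (image of a conjugate of $ST$). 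Indeed $X(6)\to X(1)$ has degree $72$ and is ramified over $j=0$, $j=1728$ and $\infty$ with indices $3$, $2$, $6$, as Riemann--Hurwitz for this map confirms ($48+36+60=144$). Your proof never checks these stabilizers against $H$, $H'$ and $H\times H'$, and the check is not optional: if, say, $H$ met an order-$3$ stabilizer nontrivially, Riemann--Hurwitz would force $g(X)=0$ rather than $1$.

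The gap is repairable by the same CRT argument you use at the cusps: the image of $S$ has order $2$ in both $S_3$ and $A_4$, and the image of $ST$ has order $3$ in both factors, so these cyclic stabilizers have \emph{both} components nontrivial and therefore meet $A_3\times\{e\}$, $\{e\}\times V_4$ and $A_3\times V_4$ only in the identity. With that added, your argument closes. For comparison, the paper sidesteps the stabilizer computation entirely: it notes that the only ramification of $X(6)\to X(3)$ (resp.\ $X(6)\to X(2)$) occurs at cusps with index $2$ (resp.\ $3$), and since the intermediate Galois cover $\chi_3$ (resp.\ $\chi_2$) has degree $3$ (resp.\ $4$) coprime to that index, it must be unramified; the map to $Z$ is then handled by a further coprimality-of-degrees argument. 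Your route is more self-contained group theory, but it requires the complete list of isotropy groups, and that is exactly the piece that is missing.
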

\begin{proof}
Fix an isomorphism $\eta_6:X_E(6) \to X(6)$. As $H,H'$ are normal subgroups we conclude the extensions of function fields of the curves in the above commutative diagram are Galois.

We show that $\chi_2$ is unramified. Suppose $P \in X_E(6)$ is a ramification point. Then it will be ramified under the map
$\psi_3: X_E(6) \to X_E(3)$
and so $\eta_6(P)$ will be ramified under the
forgetful map $X(6) \to X(3)$. By standard fact about modular curves and tower law the only ramification points are the cusps of $X(6)$ with ramification degree $2$. This shows that $P$ must have ramification degree $2$
which is a contradiction as $2$ is prime to the degree of $\chi_2$.
Therefore $g(X)=1$ by applying Riemann-Hurwitz.

Similarly as the only ramification points under $X(6) \to X(2)$ are cusps with
ramification degree $3$, we conclude that
the ramification degree of any points $P$ under $\chi_2: X_E(6) \to Y$ is a factor of $3$. Since $3$ is prime to the degree of $\chi_2$ we conclude $\chi_2$ is unramified. Applying Riemann-Hurwitz we conclude that $g(Y)=1$.

Finally, we use the above commutative diagram to conclude both $\alpha$ and
$\beta$ are unramified because $\chi_2,\chi_3$ are unramified and the degrees of the maps $X \to Z$ and $Y \to Z$ are coprime.
Then apply Riemann-Hurwitz, we conclude that $g(Z)=1$.
\end{proof}
\subsection{Galois Theory and Function Fields}
$\quad$We now compute explicit equations for $X,Y$ and $Z$.
As $X_E(2)$ and $X_E(3)$ are both
isomorphic to $\mathbb{P}^1$ over $\mathbb{Q}$ so we identify the function field of $X_E(2)$ with $\mathbb{Q}(u)$
and the function field of $X_E(3)$ with $\mathbb{Q}(\lambda)$, in the sense that in Theorem 2.5 we take the affine piece of $\mathbb{P}^1$ with coordinate $(u:v)$ by setting $v=1$ and in Theorem 2.6 we take the affine piece
of $\mathbb{P}^1$ with coordinate $(\lambda:\mu)$ by setting $\mu=1$.

Fix an elliptic curve $E$. Suppose $F$ is an elliptic curve which is directly 3-congruent to $E$. If $F$ is also 2-congruent to $E$,
then by Theorem 2.5, the ratio $\Delta_F/\Delta_E$ is a rational square. Thus, we consider the modular curve which parameterizes the families of elliptic curves that are directly 3-congruent to $E$ whose discriminant differs from $\Delta_E$ by a rational square. Then this curve is given by a quotient of $X_E(6)$, with function fields $\mathbb{Q}(\lambda,y)$ such that
$$\Delta_E y^2=\Delta_{\mathcal{E}_{\lambda,\mu}}$$
where $\mathcal{E}_{\lambda,\mu}$ is the elliptic curve as in Theorem 2.6. We denote the families of elliptic curves
parameterized by this modular curve by $\mathcal{E}_{y,\lambda,\mu}$. By considering the degree of extension of function fields, this modular curve is isomorphic to $X$ because both of them are quotient of $X_E(6)$ by the unique subgroup of order $3$ inside
$\PSL_2(\mathbb{Z}/2\mathbb{Z})$.
We have the following proposition
\begin{proposition} Let $E:y^2=x^3+ax+b$ be an elliptic curve. Then a model of $X$ in weighted projective space with coordinate $(\lambda:\mu:y)$ is
$$C_X: y^2=\lambda^4+2a\lambda^2\mu^2+4b\lambda\mu^3-\frac{1}{3}a^2\mu^4$$
By taking affine pieces, we obtain the forgetful morphism
$$C_X \to X_E(3), \quad (\lambda:1:y) \mapsto \lambda/3  \quad \text{ and } \quad (1:0:\pm 1) \mapsto (1:0).$$
\end{proposition}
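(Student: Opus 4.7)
The plan is to bring the defining relation $\Delta_E y^2 = \Delta_{\mathcal{E}_{\lambda, \mu}}$ for $X$ (established in the paragraph preceding the proposition) into the stated form by direct computation. The key identity I would need to verify is that the discriminant ratio $\Delta_{\mathcal{E}_{\lambda, \mu}}/\Delta_E$ is a perfect cube of a quartic form in $(\lambda, \mu)$.

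First, using the formulas of Theorem 2.6, I would expand
$$\Delta_{\mathcal{E}_{\lambda, \mu}} = -16\bigl(4(-27\mathfrak{c}_4)^3 + 27(-54 \mathfrak{c}_6)^2\bigr) = 1259712\bigl(\mathfrak{c}_4^3 - \mathfrak{c}_6^2\bigr),$$
which is bi-homogeneous of degree $12$ in $(\lambda,\mu)$. Substituting $c_4 = -a/27$ and $c_6=-b/54$ turns this into an explicit polynomial in $\lambda,\mu,a,b$. The main claim, to be verified by MAGMA, is the polynomial identity
$$\Delta_{\mathcal{E}_{\lambda,\mu}}/\Delta_E = F(\lambda,\mu)^3,$$
where $F$ is a quartic form in $(\lambda,\mu)$ with coefficients in $\mathbb{Q}[a,b]$. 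This cube structure reflects the general fact that for directly $3$-congruent elliptic curves over $\mathbb{Q}$ the discriminant ratio lies in $(\mathbb{Q}^\times)^3$; as a polynomial identity it follows because the orders of vanishing of $\Delta_{\mathcal{E}_{\lambda,\mu}}/\Delta_E$ along its irreducible components must all be divisible by $3$.

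Given this factorization, substituting $y = F\cdot y'$ into $y^2 = F^3$ would yield the birationally equivalent model $y'^2 = F(\lambda,\mu)$. A linear substitution $\lambda \mapsto \lambda/3$ (together with a corresponding square-scalar rescaling of $y'$) should transform this into $y^2 = \lambda^4 + 2a\lambda^2\mu^2 + 4b\lambda\mu^3 - \frac{a^2}{3}\mu^4$ as displayed. For the forgetful morphism: since this final substitution identifies Fisher's coordinate $\lambda_F$ on $X_E(3)$ with $\lambda/3$ on $C_X$, the induced map on the affine chart $\mu=1$ is $(\lambda:1:y)\mapsto \lambda/3$. The two points $(1:0:\pm 1)$ correspond to $\mu=0$ and therefore lie above $(\lambda_F:\mu_F)=(1:0)\in \mathbb{P}^1 = X_E(3)$.

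The main obstacle is the symbolic verification that $\Delta_{\mathcal{E}_{\lambda, \mu}}/\Delta_E$ is a perfect cube and the extraction of the explicit quartic $F$; although the cube structure is expected on general grounds, matching $F$ with the displayed quartic (after the linear change of coordinates) requires expansion of a degree-$12$ form in two variables with coefficients in $\mathbb{Q}[a,b]$, for which MAGMA is essential. Once $F$ is in hand, the subsequent coordinate changes and verification of the forgetful morphism are routine.
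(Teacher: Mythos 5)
Your proposal is correct and follows essentially the same route as the paper: both verify that $\Delta_{\mathcal{E}_{\lambda,\mu}}/\Delta_E$ is the cube of an explicit quartic $F(\lambda,\mu)=\lambda^4+\tfrac{2a}{9}\lambda^2\mu^2+\tfrac{4}{27}b\lambda\mu^3-\tfrac{1}{243}a^2\mu^4$, reduce $y^2=F^3$ to $y^2=F$ by dividing through by $F^2$, and rescale $\lambda$ by a factor of $3$ to reach the displayed model, with the forgetful morphism read off from these substitutions. The only cosmetic difference is that the paper writes out $F$ explicitly while you defer its extraction to MAGMA.
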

\begin{proof}
From the modular interpretation of $X$ we described above we compute
$$X:\Delta_E y^2=\Delta_{\mathcal{E}_{\lambda,\mu}}
=\Delta_E\left(\lambda^4+\frac{2a}{9}\lambda^2\mu^2+\frac{4}{27}b\lambda\mu^3-\frac{1}{243}a^2\mu^4\right)^3.$$
Writing above equation in the form
$$\frac{81y^2}{\left(\lambda^4+\frac{2a}{9}\lambda^2\mu^2+\frac{4}{27}b\lambda\mu^3-\frac{1}{243}a^2\mu^4\right)^2}
=(3\lambda)^4+2a(3\lambda^2)\mu^2+4b(3\lambda)\mu^3-\frac{1}{3}a^2\mu^4$$
we see the curve $X$ is isomorphic to $C_X$.

Finally, by construction the morphism $X \to X_E(3)$ is given by $\mathcal{E}_{y,\lambda,\mu} \mapsto \mathcal{E}_{\lambda,\mu}$
and the isomorphism $C_X \to X$ is given by
$$(\lambda:\mu:y) \mapsto \left(\frac{\lambda}{3}:\mu: \frac{y}{9}(\lambda^4+\frac{2}{9}a\lambda^2\mu^2+\frac{4}{27}b\lambda\mu^3-\frac{1}{243}a^2\mu^4)\right)$$
and so we obtain the morphism $C_X \to X_E(3)$ as described in the proposition.
\end{proof}

The method of finding a model of $Y$ is similar. Let $F$ be an elliptic curve with
the same mod 2 representation as $E$. If $F$
is also directly 3-congruent to $E$, then it corresponds to $\mathcal{E}_{\lambda,\mu}$ for some $(\lambda:\mu) \in X_E(3)$.
Thus, by a direct computation, we have
$$\Delta_F=\Delta_{\mathcal{E}_{\lambda,\mu}}=\Delta_E\left(\lambda^4+\frac{2a}{9}\lambda^2\mu^2+\frac{4}{27}b\lambda\mu^3
-\frac{1}{243}a^2\mu^4\right)^3,$$
and hence $\Delta_F$ differs from $\Delta_E$ by a rational cube.
Therefore we conclude that there is a modular curve which is a quotient of $X_E(6)$ which
parameterizes families of elliptic curves which are $2$-congruent to $E$ whose discriminant differs from $\Delta_E$ by a rational cube. The function field of the modular curve is $\mathbb{Q}(u,y)$ such that
$$y^3\Delta_E=\Delta_{F_{u,v}}$$
where $F_{u,v}$ is the elliptic curve as in Theorem 2.5.
We denote the families of elliptic curve parameterized by this modular curve by $\mathcal{F}_{y,u,v}$.
Further $Y$ is isomorphic to this modular curve as both of them are the quotient of $X_E(6)$ by the unique subgroup of order $4$ in $\PSL_2(\mathbb{Z}/3\mathbb{Z})$.
\begin{proposition}
Let $E: y^2=x^3+ax+b$ be an elliptic curve. Then a model of $Y \subset \mathbb{P}^2$ is
$$C_Y: y^3=v^3+au^2v+bu^3$$
and the morphism $C_Y \to X_E(2)$ is $(u:v:y) \mapsto (u:v)$.
\end{proposition}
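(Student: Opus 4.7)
The plan is to mirror the strategy used for Proposition 3.4 almost verbatim: start from the modular description of $Y$ given just above the statement, substitute in the explicit discriminant formula from Theorem 2.5, and then rewrite the resulting affine equation in a form that displays $C_Y$ directly.

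By the discussion preceding the proposition, $Y$ has function field $\mathbb{Q}(u,y)$ satisfying $y^3 \Delta_E = \Delta_{F_{u,v}}$, where $v$ has been normalised to $1$ in the identification of $X_E(2)$ with $\mathbb{P}^1$. First I would invoke Theorem 2.5 to substitute
\[
\Delta_{F_{u,v}} = 3^6 (v^3 + au^2v + bu^3)^2 \Delta_E,
\]
cancel $\Delta_E$ (which is nonzero since $E$ is an elliptic curve), and so reduce the defining relation of $Y$ to
\[
y^3 = 3^6 \bigl(v^3 + au^2v + bu^3\bigr)^2.
\]

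The key step is then the birational transformation that converts this to a plane cubic. Writing $w = v^3 + au^2v + bu^3$, the equation reads $y^3 = 729\,w^2$; setting $y' = 9w/y$ (with inverse $y = 9(y')^2$) produces the equivalent relation $(y')^3 = w$, which homogenises to the desired plane cubic
\[
C_Y : y^3 = v^3 + au^2v + bu^3 \subset \mathbb{P}^2.
\]
A short computation with the partial derivatives of $F = y^3 - v^3 - au^2v - bu^3$ shows that $C_Y$ is smooth precisely when $4a^3 + 27b^2 \neq 0$, i.e.\ exactly when $E$ itself is nonsingular, so $C_Y$ is a genuine smooth plane cubic of genus one. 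For the forgetful morphism, I would observe that by construction a point of $Y$ records a triple $\mathcal{F}_{y,u,v}$ whose image in $X_E(2)$ is the class $(u:v)$ of the underlying $F_{u,v}$; since the substitution $y \mapsto 9w/y$ only alters the last coordinate, the induced map on $C_Y$ is simply $(u:v:y) \mapsto (u:v)$.

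The calculation itself is entirely routine substitution; the only nontrivial point is the birational step. The main obstacle is verifying that $y \mapsto 9w/y$ is a genuine birational equivalence defined over $\mathbb{Q}$ (as opposed to a correspondence requiring a cube root), which amounts to exhibiting the explicit rational inverse $y = 9(y')^2$ and noting that both directions are regular away from the loci $y=0$ and $y'=0$, each of which is a finite proper subscheme on the respective curve.
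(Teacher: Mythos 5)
Your proposal is correct and follows essentially the same route as the paper: substitute the discriminant formula from Theorem 2.5 into $y^3\Delta_E=\Delta_{F_{u,v}}$, cancel $\Delta_E$, and observe that $(9(v^3+au^2v+bu^3)/y)^3=v^3+au^2v+bu^3$, giving the birational map $(u:v:y)\mapsto(u:v:9(v^3+au^2v+bu^3)/y)$ onto $C_Y$ and the evident forgetful morphism. The extra details you supply (the explicit inverse $y=9(y')^2$ and the smoothness check for $C_Y$) are correct but not needed beyond what the paper records.
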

\begin{proof} From the modular interpretation of $Y$ above we compute
$$y^3\Delta_E=\Delta_{F_{u,v}}=3^6(v^3 + au^2v + bu^3)^2\Delta_E$$
and so
$$\left(\frac{9(v^3+au^2v+bu^3)}{y}\right)^3=v^3+au^2v+bu^3.$$
This is clearly birational to $C_Y$ by
$$(u:v:y) \to (u:v:9(v^3+au^2v+bu^3)/y).$$
The forgetful morphism $Y \to X_E(2)$ is $\mathcal{F}_{y,u,v} \mapsto F_{u,v}$
and so $C_Y \to X_E(2)$ given by $(u:v:y)\mapsto (u:v)$.
\end{proof}

The following describes a geometric interpretation of the action of $H'$.
\begin{lemma}
We identify $X$ with its Jacobian $J_X$ over $\bar{\mathbb{Q}}$. Then the
action of $H'$ on $J_X$ is translation by 2-torsion points of $J_X$. Further, the quotient map
$J_X \to Z$ by the action of $H'$ is multiplication-by-2 map in the sense that we can identify $J_X \cong Z$ over $\bar{\mathbb{Q}}$.
\end{lemma}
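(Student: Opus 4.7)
The plan is to exploit the fact, already established in Proposition 3.3, that $\beta$ is an unramified Galois cover of degree $4$ between elliptic curves, and combine this with the classification of automorphisms of an elliptic curve over $\bar{\mathbb{Q}}$. First I would spell out how $H'$ acts on $X$. Because $\mathbb{Z}/6\mathbb{Z} \cong \mathbb{Z}/2\mathbb{Z} \times \mathbb{Z}/3\mathbb{Z}$ and $-I$ is trivial modulo $2$, one has $\PSL_2(\mathbb{Z}/6\mathbb{Z}) \cong \PSL_2(\mathbb{Z}/2\mathbb{Z}) \times \PSL_2(\mathbb{Z}/3\mathbb{Z})$, so the subgroups $H$ and $H'$ lie in commuting factors and intersect trivially. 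Hence the $H'$-action on $X_E(6)$ descends to a faithful action on $X = X_E(6)/H$, and $\beta : X \to Z$ is the resulting quotient map. Combined with Proposition 3.3, this makes $\beta$ an unramified Galois cover of degree $|H'| = 4$, and consequently the $H'$-action on $X$ is free.

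Next I would fix a basepoint $O \in X(\bar{\mathbb{Q}})$, identifying $X$ with its Jacobian $J_X$. Every automorphism of $J_X$ as a $\bar{\mathbb{Q}}$-variety has the form $\sigma : P \mapsto \epsilon_\sigma(P) + t_\sigma$ for some $\epsilon_\sigma \in \text{Aut}(J_X,O)$ and some $t_\sigma \in J_X(\bar{\mathbb{Q}})$. If $\epsilon_\sigma \neq \text{id}$, then $1 - \epsilon_\sigma$ is a nonzero endomorphism of $J_X$ and therefore a surjective isogeny over $\bar{\mathbb{Q}}$; the equation $(1 - \epsilon_\sigma)(P) = t_\sigma$ is then soluble, giving a fixed point of $\sigma$. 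Freeness of the $H'$-action excludes this, forcing $\epsilon_\sigma = \text{id}$ for every $\sigma \in H'$. Thus each element of $H'$ acts on $J_X$ by a pure translation $P \mapsto P + t_\sigma$.

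Finally, $\sigma \mapsto t_\sigma$ yields an injective homomorphism $H' \cong C_2 \times C_2 \hookrightarrow J_X(\bar{\mathbb{Q}})$, whose image is an order-$4$ subgroup of exponent dividing $2$ and hence must equal $J_X[2] \cong (\mathbb{Z}/2\mathbb{Z})^2$. So $H'$ acts precisely by translation by $J_X[2]$, and $\beta : J_X \to Z$ is the quotient by this subgroup. Since $[2] : J_X \to J_X$ is a surjective isogeny with kernel $J_X[2]$, it factors as an isomorphism $J_X / J_X[2] \cong J_X$, giving the promised identification $Z \cong J_X$ under which $\beta$ becomes multiplication by $2$. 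The only genuinely delicate step is the passage from freeness to ruling out a nontrivial $\epsilon_\sigma$, which is handled uniformly by surjectivity of nonzero isogenies even in the exceptional cases $j(J_X) \in \{0, 1728\}$ where $\text{Aut}(J_X,O)$ is larger than $\{\pm 1\}$; everything else is bookkeeping.
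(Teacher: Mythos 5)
Your proof is correct and follows essentially the same route as the paper's: both deduce freeness of the $H'$-action from the unramifiedness established in Proposition 3.3, use surjectivity of non-constant maps of elliptic curves (the paper via $f-1$ being constant, you via $1-\epsilon_\sigma$ being a surjective isogeny) to force each element of $H'$ to act as a translation by a $2$-torsion point, and then identify the quotient by $J_X[2]$ with $[2]:J_X\to J_X$ by uniqueness of the quotient isogeny. Your explicit treatment of the automorphism decomposition and the $j\in\{0,1728\}$ cases is a minor presentational refinement, not a different argument.
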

\begin{proof} Let $J_X,J_Z$ be the Jacobian of $X,Z$ respectively.
We have shown the map $X \to Z$ is unramified and so is $J_X \to J_Z$. Thus the action of $H'$ on $J_X$ does not have any fix point. So for each non-trivial element $f \in H'$, $O$ is not in the image of the morphism $f-1$ and so $f-1$ not surjective. Therefore it is constant and so $f$ acts as translation on $J_X$.
Since $f$ has order $2$ so the action is translation by 2-torsion points
and so the kernel of the quotient map $J_X \to J_Z$
is $J_X[2]$.
By Proposition 4.12 of [AEC],
there is a unique elliptic curve (up to $\bar{\mathbb{Q}}$-isomorphism) $E'$ and a separable isogeny
$$\phi: J_X \to E'$$
such that $\ker\phi=J_X[2]$ and so by uniqueness $E' \cong J_Z$. But the multiplication-by-2 map on $J_X$ also has kernel $J_X[2]$ and so by uniqueness $J_X \cong J_Z$.
\end{proof}

\begin{lemma} The curve $Z$ has a $\mathbb{Q}$-rational point and thus it is an elliptic curve. In particular,
$Z$ is the Jacobian of $X$. A model of $Z \subset \mathbb{P}^2$ is given by
$$y^2z=x^3-27\Delta_Ez^3.$$
\end{lemma}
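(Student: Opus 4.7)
The plan is to identify $Z$ with the Jacobian of $X$ and then read off a Weierstrass model from the classical invariant-theoretic formula for the Jacobian of a genus-one curve presented as $y^2$ equals a separable quartic. The first observation is that $C_X$ has the $\mathbb{Q}$-rational point $(1:0:1)$ (and its negative $(1:0:-1)$), so $X$ is itself an elliptic curve over $\mathbb{Q}$; using this as a base point, Abel--Jacobi gives a canonical $\mathbb{Q}$-isomorphism $X \cong J_X$.

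Under this identification, Lemma 3.6 says that the $\mathbb{Q}$-rational quotient map $\beta : X \to Z$ becomes the multiplication-by-$2$ isogeny on $J_X$. In particular $Z \cong J_X$ over $\mathbb{Q}$, so $Z$ is an elliptic curve over $\mathbb{Q}$ with the image of the origin of $J_X$ as a $\mathbb{Q}$-rational point. To extract a Weierstrass model, it remains to compute $J_X$ explicitly. For a genus-one curve $y^2 = f(\lambda)$ with $f$ a separable quartic, the Jacobian is classically $Y^2 = X^3 - 27 I(f) X - 27 J(f)$, where $I$ and $J$ are the standard invariants of the binary quartic $f$. Applied to $f(\lambda) = \lambda^4 + 2a\lambda^2 + 4b\lambda - a^2/3$ (setting $\mu = 1$), the formulas $I = 12p_4p_0 - 3p_3p_1 + p_2^2$ and $J = 72p_4p_2p_0 - 27p_4p_1^2 - 27p_3^2p_0 + 9p_3p_2p_1 - 2p_2^3$ give $I = 0$ and $J = -16(4a^3+27b^2) = \Delta_E$, whence $J_X : y^2z = x^3 - 27\Delta_E z^3$ as claimed.

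I expect the substantive content to be entirely packaged into Lemma 3.6 (which handles the identification of $Z$ with $J_X$) together with the existence of the rational point on $X$ at infinity (which makes the identification $\mathbb{Q}$-rational). Beyond that, the only thing to watch out for is the normalization convention for the binary-quartic invariants $I$ and $J$, since different conventions produce Weierstrass models that differ by a quadratic twist. Once the normalization is fixed so that the Jacobian formula reads $Y^2 = X^3 - 27 I X - 27 J$, the direct computation above yields the stated model with no further work.
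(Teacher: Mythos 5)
Your proof is correct, and your closing computation of the Jacobian of the binary quartic (giving $I=0$, $J=-16(4a^3+27b^2)=\Delta_E$, hence $y^2z=x^3-27\Delta_E z^3$) is precisely the ``direct computation using the standard formula in 3.1 of [AKM$^3$P]'' that the paper leaves implicit. Where you genuinely diverge is in how the rational point on $Z$ is produced. You use the rational points $(1:0:\pm 1)$ on $C_X$ to identify $X$ with $J_X$ over $\mathbb{Q}$ and push the origin forward under $\beta$. The paper deliberately avoids this: it observes that the four Weierstrass points $(t_i,0)$ of $C_X$ form a Galois-stable set which, by Lemma 3.6, is collapsed by $\beta$ to a single point $T$ (the $(t_i,0)$ differ by $2$-torsion), so $T$ is Galois-fixed and hence rational --- no rational point on $X$ is required. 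Remark 3.8 explains the reason for this choice: the identical argument is reused in Section 4 for $X^-$ and $Z^-$, where $X^-$ can fail to have any rational point (Example 4.5, $a=1$, $b=0$), so your argument would not transfer there while the paper's does. For the lemma as stated your route is perfectly valid and arguably more transparent; it simply does not generalize to the reverse case that the paper needs later. One small gloss worth tightening: from $Z \cong_{\bar{\mathbb{Q}}} J_X$ together with $Z(\mathbb{Q})\neq\emptyset$ one cannot in general conclude $Z \cong_{\mathbb{Q}} J_X$ (twists exist); the correct justification is that $\beta$ is a $\mathbb{Q}$-morphism of elliptic curves sending the origin to $T$, hence (taking $T$ as origin of $Z$) a $\mathbb{Q}$-isogeny with kernel $J_X[2]$, and uniqueness of quotients ([AEC] III.4.12, already invoked in Lemma 3.6) then gives $Z \cong J_X$ over $\mathbb{Q}$.
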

\begin{proof} Take the model $C_X$ for $X$ which is given by
$$y^2=\lambda^4+2\lambda^2\mu^2+4b\lambda\mu^3-\frac{1}{3}a^2\mu^4=p(\lambda,\mu)$$
and let $t_i,i=1,2,3,4$ be the roots of the polynomial $p(\lambda,1)$. The 2-torsion
points of $J_X$ corresponds to $(t_i,0),i=1,2,3,4$ and by Lemma 3.6 the quotient map $\beta$ sends them to the same image, say $T=\beta((t_i,0))$.

For each $\sigma \in G_\mathbb{Q}$, as $\beta$ is defined over $\mathbb{Q}$ and $\sigma$ permutes the $2$-torsion points so we conclude that
$T$ is fixed by $\sigma$. Therefore $T$ is a $\mathbb{Q}$-rational point. Thus $Z$ is an elliptic curve with identity point $T$ and by Lemma 3.6 we conclude $Z$ is the Jacobian of $X$. The model of $Z$ in the statement can be obtained by a direct computation using the standard formula which can be found in 3.1 of [AKM$^3$P], where we use the model $C_X$ for the curve $X$.
\end{proof}
\begin{remark} Following the same argument as in Lemma 3.6 we conclude $X_E(6)$ and $Y$ have the same Jacobian. Recall $X_E(6)$ always admits a rational point which corresponds to the class $(E,[1])$. Lemma 3.7 is obvious with the observation $X$ and $Y$ are both in fact elliptic curves. We did not use this fact to prove the lemma because we will need an analogue of the lemma in the next section where $X$ and $Y$ do not necessarily have a rational point.
\end{remark}
An immediate consequence of the above remark is
\begin{theorem} A model of $X_E(6) \subset \mathbb{P}^2$ is given by
$$y^2z=x^3+\Delta_Ez^3.$$
\end{theorem}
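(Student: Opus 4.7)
The plan is to reduce the theorem to identifying $\mathrm{Jac}(Y)$ and then to carry out that identification on the explicit plane cubic model $C_Y$ given in Proposition 3.5.

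First I would record that $X_E(6)$ is a smooth projective genus-one curve carrying the canonical $\mathbb{Q}$-rational point $(E,[\mathrm{id}])$; it is therefore $\mathbb{Q}$-isomorphic to its own Jacobian. Remark 3.8 asserts that the same reasoning used in Lemma 3.6 (applied to the unramified cover $\chi_2 : X_E(6)\to Y$ in place of $\beta: X\to Z$, with $H'$ acting on $\mathrm{Jac}(X_E(6))$ by translation by $2$-torsion) yields $\mathrm{Jac}(X_E(6))\cong\mathrm{Jac}(Y)$. Combining these two observations, it is enough to identify $\mathrm{Jac}(Y)$ with the elliptic curve $y^2z=x^3+\Delta_E z^3$.

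Next I would compute $\mathrm{Jac}(Y)$ directly from the model $C_Y : y^3 = v^3 + au^2 v + bu^3$. The point $P_0=(0:1:1)$ lies on $C_Y$ and is $\mathbb{Q}$-rational, so the classical recipe for putting a pointed smooth plane cubic into Weierstrass form (for instance via the formulas in \S3.1 of [AKM$^3$P], already invoked in the proof of Lemma 3.7) produces a Weierstrass model over $\mathbb{Q}$. The output must automatically have $j$-invariant $0$, since $X_E(6)$ is a twist of $X(6):Y^2=X^3+1$.

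The main obstacle will be the bookkeeping in this explicit transformation together with the final normalisation: one must verify that the Weierstrass model, after clearing denominators and absorbing sixth-power factors, simplifies precisely to $y^2z=x^3+\Delta_E z^3$ with $\Delta_E=-16(4a^3+27b^2)$. A convenient shortcut is the observation that every $\mathbb{Q}$-twist of $y^2=x^3+1$ takes the form $y^2=x^3+c$ with $c\in\mathbb{Q}^\times/(\mathbb{Q}^\times)^6$; thus only the sixth-power class of $c$ must be pinned down. This can be done either by comparing discriminants or by specialising $(a,b)$ to convenient numerical values and invoking the fact that both sides of the claimed equality are rational functions of $(a,b)$, so a match on a Zariski-dense set forces a match generically.
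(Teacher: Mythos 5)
Your proposal is correct and follows essentially the same route as the paper: the paper likewise combines the rational point $(E,[1])$ on $X_E(6)$ with Remark 3.8 (so that $X_E(6)\cong\mathrm{Jac}(X_E(6))\cong\mathrm{Jac}(Y)$) and then computes $\mathrm{Jac}(Y)$ directly from the plane cubic $C_Y$, the only difference being that the paper uses the ternary-cubic Jacobian formula of \S3.2 of [AKM$^3$P] (not \S3.1, which is for quartics) rather than your rational point $(0:1:1)$ on $C_Y$. One small caution on your shortcut: comparing discriminants of $y^2=x^3+c$ only determines the sextic-twist parameter $c$ up to sign in $\mathbb{Q}^\times/(\mathbb{Q}^\times)^6$, so you should rely on the specialisation argument (or the explicit transformation itself) to pin down the sign.
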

\begin{proof} This follows from a direct computation of the Jacobian of $Y$ using the formula in 3.2 of [AKM$^3$P] where we use the model $C_Y$ for the curve $Y$.
\end{proof}
Note this result was obtained by Rubin and Silverberg [RS3] and Papadopoulos [P] using different methods.
\subsection{Explicit Equations}
$\quad$We will compute the morphism $X_E(6) \to X_E(3)$ explicitly and hence find the families of elliptic curves parameterized by $Y_E(6)$.
\begin{lemma} The map $\chi_3: X_E(6) \to X$ is 3-isogeny over $\bar{\mathbb{Q}}$.
\end{lemma}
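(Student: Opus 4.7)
The plan is to identify $\chi_3$ as an unramified Galois cover of degree $3$ between elliptic curves over $\bar{\mathbb{Q}}$, and then reuse the ``free action must be by translation'' argument from the proof of Lemma 3.6 to conclude that $\chi_3$ is a $3$-isogeny.

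By construction, $\chi_3$ is the quotient of $X_E(6)$ by the unique order $3$ subgroup $H \cong C_3$ of $\PSL_2(\mathbb{Z}/2\mathbb{Z}) \cong S_3$, and this action is faithful, so $\deg \chi_3 = 3$. Over $\bar{\mathbb{Q}}$, both $X_E(6)$ and $X$ are elliptic curves: the former by Theorem 3.9, and the latter by Proposition 3.3 together with Lemma 3.7, which identifies $X$ with its Jacobian $Z$ (cf. Remark 3.8).

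The next step is to check that $\chi_3$ is unramified, by mimicking the argument used for $\chi_2$ in Proposition 3.3. Since $\psi_3$ factors as $X_E(6) \xrightarrow{\chi_3} X \xrightarrow{\rho_3} X_E(3)$, the ramification indices satisfy $e_P(\chi_3) \mid e_P(\psi_3)$ for every $P \in X_E(6)$. Under $\eta_6$ the map $\psi_3$ corresponds to the classical forgetful morphism $X(6) \to X(3)$, whose only ramification is at the cusps with index $2$. Hence $e_P(\chi_3)$ divides $2$; combined with $e_P(\chi_3) \mid \deg \chi_3 = 3$, this forces $e_P(\chi_3) = 1$.

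Finally, $H$ is normal in $\PSL_2(\mathbb{Z}/2\mathbb{Z})$, so $\chi_3$ is Galois with group $H$, and unramification upgrades to freeness of the $H$-action on $X_E(6)$. For each nontrivial $f \in H$, freeness means the endomorphism $f - 1$ of $X_E(6)$ (viewed as its own Jacobian) misses the identity and is therefore constant; so $f$ is translation by a nonzero $3$-torsion point. The quotient of an elliptic curve by a subgroup of its $3$-torsion of order $3$ acting by translations is by definition a $3$-isogeny, which gives the claim. The only bookkeeping point is compatibility of base points so that $\chi_3$ is genuinely a morphism of elliptic curves, but this is handled exactly as in Lemma 3.7 by taking the identity of $X$ to be the image of the identity of $X_E(6)$; I do not anticipate a serious obstacle.
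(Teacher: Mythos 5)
Your argument is correct, but it takes a noticeably different (and much longer) route than the paper. The paper's entire proof is a one-line appeal to the standard fact that over $\bar{\mathbb{Q}}$ any nonconstant morphism between genus one curves is an isogeny once base points are matched up, so a degree-$3$ morphism is a $3$-isogeny. You instead rerun the machinery of Lemma 3.6: show $\chi_3$ is unramified, deduce that the $H$-action is free, use the ``$f-1$ misses $O$, hence is constant'' trick to see that each nontrivial $f\in H$ is translation by a point of exact order $3$, and conclude that the quotient is the isogeny with that order-$3$ kernel. Both arguments work. Yours is more self-contained, makes explicit the base-point normalization that the paper leaves implicit, and identifies the kernel concretely as an order-$3$ translation subgroup of $J_{X_E(6)}[3]$; the cost is that you duplicate work already done in Proposition 3.3, where the unramifiedness of the degree-$3$ quotient (the paragraph concluding $g(X)=1$) is established by exactly the divisibility argument you give. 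Two small points of hygiene: the divisibility $e_P(\chi_3)\mid 3$ relies on the cover being Galois (stabilizer orders divide $|H|$), so that observation should come before you use it rather than after; and the citation of Lemma 3.7 is unnecessary for knowing $X$ is an elliptic curve over $\bar{\mathbb{Q}}$ --- genus one from Proposition 3.3 already suffices since every genus one curve over an algebraically closed field has a rational point.
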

\begin{proof} Over $\bar{\mathbb{Q}}$ any morphism between genus one curves is an isogeny and the map $\chi_3$ has
degree $3$ and so it is a 3-isogeny.
\end{proof}
We now need to give an explicit equation of the map $\chi_3$. Recall from Proposition 3.4 that the map $\rho_3: C_X \to X_E(3)$ is
$$(\lambda,y) \mapsto \lambda/3, \quad  \text{if } \mu=1, \quad \text {and } \quad (1:0:\pm 1) \mapsto (1:0).$$
and so we need to compute the morphism $X_E(6) \to C_X$. Recall from Theorem 3.9 our convention for the model
of $X_E(6)$ is we identify the point at infinity $(0:1:0)$ as the isomorphism class $(E,[1])$. Therefore the images of $(0:1:0)$ on $C_X$ and
$X_E(3)$ should both correspond to the class $(E,[1])$.

From Theorem 2.6, the point
corresponding to $E$ on $X_E(3)$ is the point at infinity $(\lambda:\mu)=(1:0)$. Thus, the point corresponding to
$E$ on $C_X$ should be one of the preimages of $(1:0)$, namely $(\lambda:\mu:y)=(1:0:\pm 1)$.

We use the following algorithm. Compute
the 3-isogeny $f: X_E(6) \to J_X$. Then compute an isomorphism
$g: J_X \to C_X$ where $O \in J_X$ is sent to one of
$(1:0:\pm 1)$. It does not matter
which one we pick because if the class $(E,[1])$ corresponds to $(1:0:1)$ on $C_X$, then we compose $\chi_3$ with the automorphism $[-1]$ on $X_E(6)$. We would then have $(E,[1])$ corresponding to $(1:0:-1)$ on $C_X$. We take $(1:0:1)$ for convention.
Finally, take
the quotient map $C_X \to X_E(3)$ by using Proposition 3.4.

Note that Lemma 3.10 is proved
in terms of function fields and thus it is possible that $X_E(6) \to J_X$ differs from the 3-isogeny by an
automorphism $\chi'$ of $J_X$. As we fix the convention that the point of infinity should always
corresponds to $(E,[1])$ and $\chi'$ must fix $O$. There are six such possible automorphisms, generated by $[\zeta_6]$. Since $X_E(6)$ itself is a curve of $j$-invariant 0, so we have the following diagram commutes
\begin{center}
$\begin{CD}
X_E(6) @>[\zeta^i_6]>> X_E(6)\\
@VV\chi_3 V @VV\chi_3 V\\
J_X@>[\zeta^i_6] >> J_X
\end{CD}$
\end{center}
Finally it does not matter if we compose the resulting morphism with any automorphism on $X_E(6)$ because it only changes the coordinate of $X_E(6)$.

Thus, following the above argument and computational detail we conclude
\begin{theorem}
Let $E: y^2=x^3+ax+b$ be an elliptic curve. We take an affine piece of $X_E(6): y^2=x^3+\Delta_E$, where $\Delta_E=-16(4a^3+27b^2)$. Then the morphism $X_E(6) \to X_E(3)$ is given by
$$(x,y) \mapsto v/3$$
where
$$v=\frac{-\frac{1}{6}x^3y - 18bx^3+ \frac{4\Delta_E}{3}y}{x^4 + 12ax^3 + 4\Delta_Ex}.$$
\end{theorem}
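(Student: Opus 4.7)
\textit{Proof proposal.} The plan is to carry out the three-step procedure sketched immediately before the theorem: (i) write down the 3-isogeny $f \colon X_E(6) \to J_X$ explicitly over $\bar{\mathbb{Q}}$; (ii) fix an isomorphism $g \colon J_X \to C_X$ sending the origin of $J_X$ to $(1:0:1) \in C_X$; (iii) compose with the forgetful map $\rho_3$ of Proposition 3.4, which in affine coordinates sends $(\lambda, y) \mapsto \lambda/3$. The quantity $v$ appearing in the theorem is then nothing but the $\lambda$-coordinate of $g \circ f(x,y)$.

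For (i), both $X_E(6) \colon y^2 = x^3 + \Delta_E$ and $J_X \colon Y^2 = X^3 - 27 \Delta_E$ are curves of $j$-invariant $0$, and the rational order-$3$ subgroup $\{O, (0, \pm\sqrt{\Delta_E})\} \subset X_E(6)$ yields, via V\'elu's formulas, the 3-isogeny
\[ f(x, y) = \left( \frac{x^3 + 4\Delta_E}{x^2}, \ \frac{y(x^3 - 8\Delta_E)}{x^3} \right). \]
A direct substitution using $y^2 = x^3 + \Delta_E$ confirms that the image lies on $Y^2 = X^3 - 27\Delta_E$. The six-fold ambiguity in choosing $f$ (post-composition with $[\zeta_6^i]$) merely composes the final answer with an automorphism of $X_E(6)$, as explained in the commutative diagram before the theorem, so I fix the $f$ above.

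For (ii), Lemma 3.7 identifies $J_X$ with the Jacobian of the binary quartic $C_X \colon y^2 = \lambda^4 + 2a\lambda^2 + 4b\lambda - a^2/3$, and the standard formulas for the Jacobian of a binary quartic (Section 3.1 of [AKM$^3$P]) provide an explicit isomorphism $C_X \to J_X$, which we normalize so that $(1:0:1)$ is sent to the origin of $J_X$. Inverting this to express $\lambda$ as a rational function of $(X,Y)$ and then substituting $(X, Y) = f(x, y)$ produces $v(x,y)$; applying $\rho_3$ yields the stated formula $(x, y) \mapsto v/3$.

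All the conceptual ingredients — the identification $J_X \cong \mathrm{Jac}(C_X)$ (Lemma 3.7), the explicit description of $\rho_3$ (Proposition 3.4), and the factorization $\psi_3 = \rho_3 \circ \chi_3$ through a 3-isogeny (Lemma 3.10) — are already in place, so what remains is purely computational. The main obstacle is the algebraic simplification of $\rho_3 \circ g \circ f$: the raw substitution produces a long rational function in $x, y, a, b$ which must be reduced modulo $y^2 - x^3 - \Delta_E$ to reach the compact form displayed, a task naturally delegated to a computer algebra system such as MAGMA, consistent with the acknowledgements.
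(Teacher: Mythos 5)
Your proposal is correct and follows essentially the same route as the paper: the paper's proof likewise writes down the explicit 3-isogeny $f:(x,y)\mapsto\bigl(\tfrac{x^3+4\Delta_E}{x^2},\tfrac{y(x^3-8\Delta_E)}{x^3}\bigr)$ onto $J_X: y^2=x^3-27\Delta_E$, an explicit isomorphism $g:J_X\to C_X$ sending $O$ to $(1:0:1)$, and then applies $\rho_3$ from Proposition 3.4, with the $[\zeta_6^i]$-ambiguity dismissed exactly as you do. The only cosmetic difference is that you describe how to produce $f$ (V\'elu) and $g$ (inverting the [AKM$^3$P] formulas) where the paper simply records the resulting formulas.
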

\begin{proof} The proof basically follows from the argument above. We give some computational detail. Note $J_X$ has equation $y^2=x^3-27\Delta_E$. We obtain the $3$-isogeny
$$f:X_E(6) \to J_X,(x,y) \mapsto \left(\frac{x^3+4\Delta_E}{x^2},\frac{x^3y-8\Delta_Ey}{x^3}\right)$$
and the isomorphism
$$g:J_X \to C_X,  (x,y) \mapsto \left(\frac{-\frac{y}{6}-18b}{x+12a},\frac{\frac{1}{18}x^3 + ax^2 - \frac{1}{36}y^2 - 6by - 48a^3 - 324b^2}{(x+12a)^2}\right)$$
which sends $O$ to $(1:0:1)$. Taking composition we obtain the map $\chi_3$. Finally use Proposition 3.4 we have the desired morphism.
\end{proof}
Thus, the families of elliptic curves parameterized by $Y_E(6)$ are $\mathcal{E}_{\lambda,\mu}$ in Theorem 2.6 with $(\lambda:\mu)$ in the image of $Y_E(6) \to Y_E(3)$.
\begin{remark}
\begin{enumerate}
\item We did not use the observation that $X,Y$ have rational points as we will see later the analogue of these curves in the reverse case do not always have rational points..\\
\item Compared to the results in [R], the result we got in Theorem 3.11 is a bit simpler and works for all $j$-invariant.
\end{enumerate}
\end{remark}
\section{Explicit Families of Reverse 6-Congruent Elliptic Curves}
We now use a similar approach to compute a model for $X^{-}_E(6)$.
\subsection{Setup}
We establish a commutative diagram as in the previous section as follows:
\begin{center}
$\begin{CD}
X^{-}_E(6) @>\chi^{-}_2>>Y^{-} @>\rho^{-}_2>> X^{-}_E(2)\\
@VV\chi^{-}_3V @VV \alpha^{-} V \\
X^{-} @>\beta^{-}>> Z^{-} \\
@VV \rho^{-}_3V\\
X^{-}_E(3)
\end{CD}$
\end{center}
Note that $X^{-}_E(2)$ is the same as $X_E(2)$ and $H,H'$ are the same as in the previous section and we define the quotient maps in a similar way as in the previous section. We now compute models for $X^{-},Y^{-}$ and $Z^{-}$.
\begin{proposition} Let $E:y^2=x^3+ax+b$ be an elliptic curve. Then a model of $X^{-}$ in weighted projective space is given by
$$C_{X^{-}}: y^2=\Delta_E(a\lambda^4+6b\lambda^3\mu-2a^2\lambda^2\mu^2-2ab\lambda\mu^3+(-a^3/3-3b^2)\mu^4).$$
By taking affine pieces, the morphism $C_{X^{-}} \to X^-_E(3)$ is
$$C_{X^{-}} \to X^{-}_E(3), \quad (\lambda,y) \mapsto \lambda/3, \text{ if }\mu=1 \text{ and } (1:0:\pm \sqrt{a\Delta_E}) \mapsto (1:0).$$
\end{proposition}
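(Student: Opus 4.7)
The plan is to follow the proof of Proposition 3.4 step by step, substituting the reverse 3-congruent family $\mathcal{E}'_{\lambda,\mu}$ of Theorem 2.6 for the direct family $\mathcal{E}_{\lambda,\mu}$. Since $X^{-}_E(2) = X_E(2)$, the 2-congruence condition on a curve $F$ is unchanged from the direct setting, and by Theorem 2.5 it is equivalent to $\Delta_F/\Delta_E$ being a rational square. The modular curve parameterising elliptic curves that are reverse 3-congruent \emph{and} 2-congruent to $E$ therefore has function field $\mathbb{Q}(\lambda,y)$ with defining relation
$$\Delta_E\, y^{2} = \Delta_{\mathcal{E}'_{\lambda,\mu}},$$
and by the same degree count via the unique order-$3$ subgroup of $\PSL_2(\mathbb{Z}/2\mathbb{Z})$ this curve must coincide with $X^{-}$.

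The remainder is an explicit polynomial computation. Expanding $\Delta_{\mathcal{E}'_{\lambda,\mu}}$ from the Weierstrass form $y^2 = x^3 - 27\mathfrak{c}_4^*(\lambda,\mu)\,x - 54\mathfrak{c}_6^*(\lambda,\mu)$ via $\Delta = -16(4A^3+27B^2)$, and substituting $c_4 = -a/27$, $c_6 = -b/54$ into the formulae of Theorem 2.6, I expect the result to take the form
$$\Delta_{\mathcal{E}'_{\lambda,\mu}} = \Delta_E^{2}\, R(\lambda,\mu)\, S(\lambda,\mu)^{2},$$
with $R,S$ polynomials of degree $4$ in $\lambda,\mu$ and $R$ equal — up to the same rescaling of $\lambda$ used in Proposition 3.4 — to the quartic in the statement. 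Plugging this factorisation back into $\Delta_E y^2 = \Delta_{\mathcal{E}'_{\lambda,\mu}}$, absorbing the square factor via $y \mapsto y/S$, and then rescaling $\lambda$, yields the claimed model $C_{X^{-}}$.

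The forgetful morphism is then read off from the construction. By the modular interpretation the original map sends $(\lambda, y)$ with $\mu=1$ to $\lambda \in X^{-}_E(3)$, and after the $\lambda$-rescaling this becomes $(\lambda, y) \mapsto \lambda/3$. For the behaviour at infinity, setting $\mu = 0$ in $C_{X^{-}}$ leaves $y^{2} = a\Delta_E\, \lambda^{4}$, so the two preimages of $(1:0) \in X^{-}_E(3)$ are $(1:0:\pm\sqrt{a\Delta_E})$, which need not be defined over $\mathbb{Q}$ — consistent with the observation in Remark 3.13 that $X^{-}$ may fail to carry an obvious rational point.

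The main obstacle is verifying the polynomial identity $\Delta_{\mathcal{E}'_{\lambda,\mu}} = \Delta_E^{2}\, R\, S^{2}$, a dense identity of total degree $12$ in $\lambda,\mu$ with coefficients in $\mathbb{Q}[a,b]$. It is mechanical but heavy, and is best carried out with computer algebra as in the author's setup. The underlying reason such a factorisation must exist is the classical syzygy between the invariants and covariants of a binary quartic which underpins Fisher's reverse-congruence construction: the discriminant of the reverse family is forced to factor through the quartic $R$ together with a squared covariant.
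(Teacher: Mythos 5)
Your proposal follows the same route as the paper's proof: identify $X^{-}$ by the degree count as the double cover $\Delta_E y^2=\Delta_{\mathcal{E}'_{\lambda,\mu}}$ of $X^{-}_E(3)$, factor the discriminant, absorb the square factor into $y$, and rescale $\lambda$. The paper's explicit computation gives $\Delta_{\mathcal{E}'_{\lambda,\mu}}=\tfrac{2^{36}3^{36}a^2}{\Delta_E^4}\,h(\lambda,\mu)^3$ with $h$ the quartic of the statement up to the substitution $\lambda\mapsto 3\lambda$, so your expected factorization $\Delta_E^2\,R\,S^2$ does hold, but with $S$ proportional to $R=h$ itself (the cube split as $h\cdot h^2$) rather than a distinct covariant; everything else, including reading off the fibre $(1:0:\pm\sqrt{a\Delta_E})$ over $(1:0)$ by setting $\mu=0$, matches the paper.
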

\begin{proof} The proof is similar to that of Proposition 3.4. By using the results for $X^{-}_E(3)$ in Theorem 2.6, a model for $X^{-}$ in weighted projective space
$(\lambda:\mu:y)$ is
$$X^{-}: y^2\Delta_E=\frac{2^{36} 3^{36} a^2}{\Delta^4_E}h(\lambda,\mu)^3,$$
where
$$h(\lambda,\mu)=a\lambda^4 + 2b\lambda^3\mu - \frac{2}{9}a^2\lambda^2\mu^2 - \frac{2}{27}ab\lambda\mu^3 + (-\frac{1}{243}a^3 - \frac{1}{27}b^2)\mu^4,$$
Putting $X^{-}$ in the form
\begin{align*}
\left(\frac{y\Delta^3_E}{2^{18}3^{16}ah(\lambda,\mu)}\right)^2
=\Delta_E&((3\lambda)^4+6b(3\lambda)^3\mu-2a^2(3\lambda^2)\mu^2-2ab(3\lambda)\mu^3\\
+&(-a^3/3-3b^2)\mu^4),\\
\end{align*}
we see that it is birational to $C_{X^{-}}$ by
$$(\lambda:\mu:y) \mapsto \left(3\lambda: \mu: \frac{y\Delta^3_E}{2^{18}3^{16}ah(\lambda,\mu)}\right).$$

The last part follows from taking compositions $C_{X^-} \to X^- \to X^{-}_E(3)$ where $X^- \to X^{-}_E(3)$ is given by $(\lambda:\mu:y) \mapsto (\lambda:\mu)$.
\end{proof}
Let $F$ be an elliptic curve which is $2$-congruent to $E$. Observe that if $F$ and $E$ are also reverse 3-congruent, then the product of their discriminants is a rational cube. In fact, this is immediate from the computation of $X^{-}$ in Proposition 4.1. Thus by exactly the same argument as we used in Proposition 3.5 we conclude that $Y$ is the modular curve which parameterizes families of elliptic curves which are $2$-congruent to $E$ whose
discriminant multiplied by $\Delta_E$ is a rational cube. Therefore,
\begin{proposition}
Let $E: y^2=x^3+ax+b$ be an elliptic curve. Then a model of $Y^{-} \subset \mathbb{P}^2$ is given by
$$C_{Y^{-}}: y^3=\Delta_E(v^3+au^2v+bu^3)$$
and the morphism $C_{Y^{-}}  \to X_E(2)$ is $(u:v:y) \mapsto (u:v)$.
\end{proposition}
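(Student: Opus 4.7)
The plan is to follow the template of the proof of Proposition 3.5 verbatim, with the direct-case cube condition $\Delta_F/\Delta_E \in (\mathbb{Q}^\times)^3$ replaced by its reverse-case counterpart $\Delta_F \cdot \Delta_E \in (\mathbb{Q}^\times)^3$. The latter characterization of $Y^-$ as the modular curve parameterizing elliptic curves $F$ which are 2-congruent to $E$ with $\Delta_F \Delta_E$ a rational cube was already established in the paragraph preceding the statement, by importing the computation of $X^-$ from Proposition 4.1. Thus the bulk of the modular-theoretic input is already in hand, and all that is left is to turn this description into an equation.

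Concretely, I would first identify the function field of $Y^-$ with $\mathbb{Q}(u,y)$ subject to the defining relation
$$y^3 \;=\; \Delta_E \cdot \Delta_{F_{u,v}},$$
where $F_{u,v}$ is the family of 2-congruent curves from Theorem 2.5. Substituting the formula $\Delta_{F_{u,v}} = 3^6(v^3+au^2v+bu^3)^2 \, \Delta_E$ given there yields
$$y^3 \;=\; 3^6 \, \Delta_E^2 \, (v^3+au^2v+bu^3)^2.$$

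Next, I would exhibit an explicit birational map to the stated model by the substitution
$$y \;\longmapsto\; \frac{9\,\Delta_E(v^3+au^2v+bu^3)}{y},$$
a direct check of which converts the displayed equation into
$$C_{Y^-}\colon \; y^3 \;=\; \Delta_E(v^3+au^2v+bu^3),$$
matching the proposition. Finally, the forgetful morphism $Y^- \to X_E(2)$ corresponds at the level of moduli to $\mathcal{F}_{y,u,v} \mapsto F_{u,v}$, which in the coordinates $(u:v:y)$ is just the projection $(u:v:y) \mapsto (u:v)$.

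The one step that genuinely requires thought is fixing the correct cube condition on discriminants, and that was already carried out before the statement as the reverse analogue of the computation done for $Y$ in the direct case. Everything remaining is a mechanical verification parallel to Proposition 3.5, so the main obstacle is purely bookkeeping — tracking the extra factor of $\Delta_E$ accompanying the cube relation and confirming that the chosen birational substitution of $y$ produces precisely the advertised model $C_{Y^-}$.
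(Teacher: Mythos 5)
Your proposal is correct and follows essentially the same route as the paper: both reduce to the relation $y^3=\Delta_E\Delta_{F_{u,v}}$ coming from the cube condition $\Delta_F\Delta_E\in(\mathbb{Q}^\times)^3$ established just before the statement, and both pass to the model $C_{Y^-}$ via the substitution $y\mapsto 9\Delta_E(v^3+au^2v+bu^3)/y$, with the forgetful morphism read off from $\mathcal{F}_{y,u,v}\mapsto F_{u,v}$. (Your intermediate computation is in fact cleaner than the paper's, whose displayed equation after the substitution contains a harmless typo.)
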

\begin{proof} Using the explicit equations for $X_E(2)$ as in Theorem 2.5 and the argument above, we have
$$y^3=\Delta_E \Delta_{F_{u,v}}$$
which gives
$$\left(\frac{9\Delta_E(v^3+au^2v+bu^3)}{y}\right)^3=(v^3+au^2v+bu^3)^2\Delta_E.$$
This is birational to $C_{Y^{-}}$ by
$$(u:v:y) \mapsto (u:v:9\Delta_E(v^3+au^2v+bu^3)/y).$$
\end{proof}
The proof of Lemma 3.6, 3.7 and 3.10 did not use any property of direct congruence and so the same results still hold in the reverse case. In particular, an analogue of Remark 3.8 shows $X^{-}_E(6)$ and $Y^-$ have the same Jacobian. A direct computation using the model $C_{Y^-}$ for $Y^-$ shows
\begin{corollary} The Jacobian of $X^{-}_E(6)$ is
$$y^2z=x^3+\frac{1}{\Delta_E}z^3.$$
\end{corollary}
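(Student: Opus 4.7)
The plan parallels the derivation of Theorem 3.9. As noted in the paragraph preceding the corollary, the proofs of Lemma 3.6, Lemma 3.7 and Lemma 3.10 never used anything specific to direct congruence, so all three carry over verbatim to the reverse setting. Combined with the analogue of Remark 3.8---which only requires that $\chi^-_2 : X^-_E(6) \to Y^-$ be unramified of degree coprime to $2$---this shows that $X^-_E(6)$ and $Y^-$ have the same Jacobian over $\mathbb{Q}$, so it suffices to compute the Jacobian of the plane cubic $C_{Y^-}: y^3 = \Delta_E(v^3 + au^2v + bu^3)$ furnished by Proposition 4.2.

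For this I would apply the formula of AKM$^3$P Section 3.2, which expresses the Weierstrass model of the Jacobian of a smooth plane cubic in terms of the Aronhold invariants $S$ and $T$ of its defining ternary cubic form. Reading off $S$ and $T$ from $F(u,v,y) = y^3 - \Delta_E(v^3 + au^2v + bu^3)$ and substituting into the normalized Weierstrass form should, after simplification, give $y^2z = x^3 + z^3/\Delta_E$.

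As a sanity check---and a route that avoids any fresh invariant-theoretic calculation---observe that Theorem 3.9 already computed the Jacobian of the companion curve $C_Y: y^3 = v^3 + au^2v + bu^3$ to be $y^2z = x^3 + \Delta_E z^3$. Since $C_{Y^-}$ is obtained from $C_Y$ by the $\bar{\mathbb{Q}}$-coordinate change $y \mapsto \Delta_E^{1/3} y$, it is precisely the cubic twist of $C_Y$ by the class of $\Delta_E$ in $\mathbb{Q}^{\times}/(\mathbb{Q}^{\times})^3$. For a $j = 0$ Weierstrass curve $y^2 = x^3 + c$, such a cubic twist replaces $c$ by $c/\Delta_E^2$, so $c = \Delta_E$ becomes $c = 1/\Delta_E$, matching the claimed equation. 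The main obstacle is thus purely bookkeeping---tracking the exponent and sign of the sextic-twist class---and the above consistency check pins the answer down uniquely, so the remaining computation is routine.
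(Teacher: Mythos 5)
Your proposal follows the paper's proof exactly: the paper likewise invokes the analogue of Remark 3.8 to identify the Jacobians of $X^{-}_E(6)$ and $Y^{-}$, and then simply states that ``a direct computation using the model $C_{Y^-}$'' with the formulae of 3.2 of [AKM$^3$P] yields the stated Weierstrass equation. Your cubic-twist consistency check is correct and in fact supersedes the invariant computation: writing $y^3-\Delta_E g(u,v)=\Delta_E\,F(u,v,y/\Delta_E^{1/3})$ with $F$ the cubic defining $C_Y$, the degree-$6$ invariant scales by $\Delta_E^6\cdot(\Delta_E^{-1/3})^6=\Delta_E^4$, so Theorem 3.9 forces the Jacobian to be $y^2z=x^3+\Delta_E^5z^3\cong x^3+\Delta_E^{-1}z^3$. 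One small slip in your parenthetical: $\chi^{-}_2$ has degree $4$, which is not coprime to $2$; what the argument of Lemma 3.6 and Remark 3.8 actually uses is that $\chi^{-}_2$ is unramified and is the quotient by $H'\cong C_2\times C_2$ acting by translations by $2$-torsion, so that the induced isogeny of Jacobians has kernel the full $2$-torsion and is therefore multiplication by $2$ up to isomorphism.
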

As $Z^-$ is the Jacobian of $X^-$, a direct computation using the model $C_{X^-}$ for $X^-$ shows
\begin{proposition}
A model of $Z^-$ is given by
$$y^2z=x^3-\frac{27}{\Delta_E}z^3.$$
\end{proposition}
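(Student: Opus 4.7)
The plan is to mimic the proof of Lemma 3.7 almost verbatim. The reverse analogue of Lemma 3.6 goes through unchanged, since that proof relied only on the fact that $X\to Z$ is an unramified quotient by a $C_2\times C_2$-action on a genus-one curve; the same holds for $X^{-}\to Z^{-}$. Hence $Z^{-}$ is the Jacobian of $X^{-}$, and it suffices to compute this Jacobian starting from the explicit model $C_{X^{-}}$ of Proposition 4.1.

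To carry this out, I would apply the same formula from \S3.1 of [AKM$^3$P] that Lemma 3.7 invoked: for a genus-one curve of the form $y^2=f(x)$ with $f$ a quartic, the Jacobian is $y^2 = x^3 - 27\,I(f)\,x - 27\,J(f)$, where $I(f)$ and $J(f)$ are the classical $\mathrm{SL}_2$-invariants of the binary quartic $f$ of degrees $2$ and $3$ respectively. Setting $\mu=1$ in $C_{X^{-}}$ gives $f(\lambda) = \Delta_E\bigl(a\lambda^4 + 6b\lambda^3 - 2a^2\lambda^2 - 2ab\lambda - \tfrac{a^3}{3} - 3b^2\bigr)$. I would substitute this in, using the scaling $I(\delta f)=\delta^2 I(f)$ and $J(\delta f)=\delta^3 J(f)$ to factor out powers of $\Delta_E$ cleanly.

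I expect $I(f)=0$, so that the Jacobian collapses to the form $y^2=x^3+d$. This can be predicted without calculation: $X^{-}$ is isogenous over $\bar{\mathbb{Q}}$ to $X^{-}_E(6)$, which by Corollary 4.3 has $j$-invariant $0$, so the Jacobian of $X^{-}$ must also have $j=0$. The remaining task is to evaluate $d=-27\,\Delta_E^{3}J(q)$ explicitly (where $q$ is the quartic in brackets above) and verify, using the defining relation $\Delta_E=-16(4a^3+27b^2)$, that it simplifies to $-27/\Delta_E$.

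The main obstacle is the sheer bulk of the symbolic manipulation: the invariants of the binary quartic, written out in $a$ and $b$, produce unwieldy polynomial expressions, and collapsing $\Delta_E^{3}J(q)$ down to $1/\Delta_E$ requires repeated applications of the identity $4a^3+27b^2=-\Delta_E/16$. This is a routine but tedious calculation best delegated to a computer algebra system, and indeed the authors of the excerpt hand it to MAGMA.
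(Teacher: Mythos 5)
Your plan is the same as the paper's: the paper simply observes that the proofs of Lemmas 3.6 and 3.7 carry over unchanged to the reverse setting, so that $Z^{-}$ is the Jacobian of $X^{-}$, and then obtains the model by ``a direct computation'' from $C_{X^{-}}$ using the formula in \S 3.1 of [AKM$^3$P]; you have in fact supplied more detail than the paper does. Two caveats about your execution. First, your no-calculation prediction that $I=0$ is not actually a valid inference: a curve $3$-isogenous to a curve of $j$-invariant $0$ need not itself have $j=0$ (the modular polynomial factors as $\Phi_3(0,Y)=Y\,(Y+2^{15}\cdot 3\cdot 5^3)^3$, so $j=-2^{15}\cdot 3\cdot 5^3$ also occurs). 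This is harmless, since the direct check does give $I(q)=12a\left(-\tfrac{a^3}{3}-3b^2\right)-3(6b)(-2ab)+4a^4=0$. Second, and more consequentially for anyone following your recipe: one finds $J(q)=4(4a^3+27b^2)^2=\Delta_E^2/64$, so your constant is $d=-27\,\Delta_E^{3}J(q)=-27\,\Delta_E^{5}/64$, which does \emph{not} simplify to $-27/\Delta_E$ no matter how often you substitute $4a^3+27b^2=-\Delta_E/16$. The two constants differ by the sixth power $(\Delta_E/2)^6$, so you must conclude with the $\mathbb{Q}$-isomorphism $(x,y)\mapsto\bigl((\Delta_E/2)^2x,(\Delta_E/2)^3y\bigr)$ (legitimate, since a model of the Jacobian is only determined up to isomorphism) to arrive at the stated equation $y^2z=x^3-\tfrac{27}{\Delta_E}z^3$. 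With that final rescaling added, your proof is correct and matches the paper's.
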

\subsection{Explicit Model}
$\quad$We will compute a model for $X^{-}_E(6)$. In this previous section, we compute a model
of $X_E(6)$ by the fact it always admits a rational point. However, this is not the case in the reverse case. We give an example where $X^{-}$ has no rational point which then implies $X^-_E(6)$ has no rational point.
\begin{example} Let $a=1,b=0$ then the curve $C_{X^{-}}$ is isomorphic to
$$3y^2=-3x^4+6x^2+1.$$
This is not locally soluble at 3, and so has no rational solution.

\end{example}
We see from the commutative diagram $X^{-}$ is a 2-covering of $Z^{-}$, $Y^{-}$ is a 3-isogeny covering of $Z^-$ and $X^{-}_E(6)$ is the fiber product of $X^{-}$ and $Y^{-}$ over $Z^-$ with respect to the quotient maps.
We construct curves $X^{-}_1,Z^{-}_1$ such that $X^{-}_1$ is a 2-covering of $Z^{-}_1$ and $Y^{-}$ is a
3-covering of $Z^{-}_1$, then we apply the algorithms in [F3], which gives a way to compute a model for the
6-covering of an elliptic curve as the fiber product of a 2-covering and a 3-covering.

As we require
$X^{-}_E(6)$ to be a 6-covering of $Z^{-}_1$, so we take $Z^{-}_1$ to be the Jacobian of $X^{-}$
$$Z^{-}_1: y^2z=x^3+\frac{1}{\Delta_E} z^3.$$
Then we need to find a curve $X^{-}_1$ such that the following diagram commutes
\begin{center}
$\begin{CD}
X^{-}_E(6) @>>> Y^{-}\\
@VVV @VVV\\
X^{-}_1@>>> Z^{-}_1
\end{CD}$
\end{center}
Since $Z^{-}$ is already a 3-isogeny covering of $Z^{-}_1$ we can compute $X^{-}_1$ by making the following diagram commute
\begin{center}
$\begin{CD}
X^{-}_E(6) @>>> Y^{-}\\
@VVV @VVV\\
X^{-} @>>> Z^{-}\\
@VVV @VVV\\
X^{-}_1@>>> Z^{-}_1
\end{CD}$
\end{center}
where the map $Z^{-} \to Z^{-}_1$ is geometrically the dual isogeny of $Y^{-} \to Z^{-}_1$.
\begin{lemma} A model of $X^{-}_1$ is given by the equation
$$-3y^2=\Delta_E(a\lambda^4+6b\lambda^3\mu-2a^2\lambda^2\mu^2-2ab\lambda\mu^3+(-a^3/3-3b^2)\mu^4).$$
Note that this only differs from the equation of $C_{X^-}$ by a factor of $-3$ on the left hand side.
\end{lemma}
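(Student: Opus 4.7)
The plan is to realize $X^-_1$ as the quadratic twist of $C_{X^-}$ by $-3$, with the twist parameter forced by the fact that $Z^-$ and $Z^-_1$ are themselves $(-3)$-twists of one another.

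First I would verify the twist relation between the Jacobians. Applying the standard formula for the $d$-twist of $y^2 = x^3 + Ax + B$, namely $y^2 = x^3 + d^2 A x + d^3 B$, to $Z^-_1$ with $A=0$, $B=1/\Delta_E$, $d=-3$, produces $y^2 = x^3 - 27/\Delta_E$, which is exactly $Z^-$. Hence there is an isomorphism $Z^- \cong Z^-_1$ defined over $\mathbb{Q}(\sqrt{-3})$ but not over $\mathbb{Q}$.

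Next I would consider the candidate curve $-3y^2 = \Delta_E Q(\lambda,\mu)$, where $Q$ denotes the quartic in the statement. This is by definition the $(-3)$-quadratic twist of $C_{X^-}$. Since quadratic twisting commutes with the Jacobian construction (as one checks from the scaling $I(df)=d^2 I(f)$, $J(df)=d^3 J(f)$ of binary-quartic invariants and the Weierstrass form of the Jacobian), the Jacobian of the candidate equals the $(-3)$-twist of the Jacobian of $C_{X^-}$, which is $Z^-$ by Proposition 4.4. Thus the candidate is a $2$-covering of $Z^-_1$, as required.

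It remains to show that the candidate actually fits into the commutative diagram, i.e., that the dual $3$-isogeny $Z^- \to Z^-_1$ lifts to a map $X^- \to X^-_1$ over $\mathbb{Q}$. Over $\bar{\mathbb{Q}}$ the two $2$-coverings trivialize and the lift reduces to the geometric $3$-isogeny. For the descent to $\mathbb{Q}$, the cocycles defining $C_{X^-}$ and the candidate, taking values in $Z^-[2]$ and $Z^-_1[2]$ respectively, must correspond under the Galois isomorphism of $2$-torsion induced by the $3$-isogeny. They do: both this identification and the one induced by the quadratic-twist isomorphism descend from the same $\bar{\mathbb{Q}}$-isomorphism $Z^- \cong Z^-_1$, because the twist cocycle factors through $[-1]$ (which is trivial on $2$-torsion) and the $3$-isogeny, being of odd degree, restricts to an isomorphism on $2$-torsion. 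The main obstacle is precisely this compatibility — pinning down $-3$ rather than some other class in $\mathbb{Q}^\times/(\mathbb{Q}^\times)^2$ as the correct twist parameter — after which the model of the lemma follows immediately from the twist formula applied to $C_{X^-}$.
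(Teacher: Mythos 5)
Your overall strategy is the same as the paper's: exhibit the candidate curve, check that its Jacobian is $Z^-_1$, and then argue that the $3$-isogeny $f:Z^-\to Z^-_1$ lifts over $\mathbb{Q}$ to a map $X^-\to X^-_1$ because the $2$-torsion-valued cocycles of the two $2$-coverings correspond under $f$. Your packaging via quadratic twists is a genuine improvement in motivation: the paper simply writes down the factor $-3$ and verifies the Jacobian by the formulae of [AKM$^3$P], whereas you derive it from the observation that $y^2=x^3-27/\Delta_E$ is the $(-3)$-twist of $y^2=x^3+1/\Delta_E$ together with the fact that scaling a binary quartic commutes with forming the Jacobian. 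The paper instead builds the lift explicitly as $I_2^{-1}fI_1$ from trivializations of the two $2$-coverings over $\bar{\mathbb{Q}}$, and reduces rationality to the same cocycle-matching condition you isolate.

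The gap is in the sentence that carries the whole weight: ``They do: both this identification and the one induced by the quadratic-twist isomorphism descend from the same $\bar{\mathbb{Q}}$-isomorphism\dots''. The facts you cite (the twist cocycle lands in $[\pm1]$, which is trivial on $2$-torsion; an odd-degree isogeny is injective on $2$-torsion) show only that the twist isomorphism $\tau$ and the isogeny $f$ \emph{each} induce a Galois-equivariant isomorphism $Z^-[2]\to Z^-_1[2]$; they do not show these two isomorphisms coincide, and coincidence is exactly what makes $f_*$ of the class of $C_{X^-}$ equal to the class of your twist. Since $Z^-$ has $j$-invariant $0$, the composite $\tau^{-1}f\in\operatorname{End}_{\bar{\mathbb{Q}}}(Z^-)=\mathbb{Z}[\zeta_3]$ is a unit times $\sqrt{-3}$; while $\sqrt{-3}=1+2\zeta_3\equiv 1\ (\mathrm{mod}\ 2)$ acts trivially on $2$-torsion, the unit could a priori be $\pm\zeta_3^{\pm1}$, which acts on $Z^-[2]\cong\mathbb{F}_2^2$ with order $3$. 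So you still have to pin down the unit --- equivalently, make the ``suitable choice'' of isogeny and twist map; for instance, check that for $c^3=-B$ both the standard $3$-isogeny $(x,y)\mapsto\bigl((x^3+4B)/x^2,\dots\bigr)$ and the twist map $(x,y)\mapsto(-3x,\sqrt{-27}\,y)$ send the $2$-torsion point $(c,0)$ to $(-3c,0)$. With that one computation supplied your argument closes; as written, the step is a non sequitur. (In fairness, the paper's proof is comparably terse at the same point, deferring to ``one then checks''.)
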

\begin{proof} A direct computation shows the Jacobian of the above curve is $Z^-_1$ by using formulae in 3.1 of [AKM$^3$P]. The map $\hat{\chi}^{-}_3: X^{-} \to X^{-}_1$ is geometrically a 3-isogeny. Thus, it can be constructed by $\hat{\chi}^{-}_3=I^{-1}_2 f I_1$, where $f: Z^{-} \to Z^{-}_1$ is the $3$-isogeny and the isomorphisms $I_1,I_2$, defined over $\bar{\mathbb{Q}}$ are the ones which satisfy the following commutative diagrams:
\begin{center}
$\begin{CD}
X^{-} @>\pi_2 >> Z^{-}\\
@VV I_1 V @|\\
Z^{-} @>[2]>> Z^{-}\\
\end{CD}$
\end{center}
\begin{center}
$\begin{CD}
X^{-}_1 @>\pi'_2 >> Z^{-}_1\\
@VV I_2 V @|\\
Z^{-}_1 @>[2]>> Z^{-}_1\\
\end{CD}$
\end{center}
where $\pi_2, \pi'_2$ are the 2-covering maps defined over $\mathbb{Q}$. Then the resulting $\hat{\chi}^{-}_3$
is geometrically a 3-isogeny and also satisfies the commutative diagram
\begin{center}
$\begin{CD}
X^{-} @>\pi_2 >> Z^{-}\\
@VV \hat{\chi}^{-}_3 V @VV fV\\
X^{-}_1 @>\pi'_2>> Z^{-}_1\\
\end{CD}$
\end{center}
because
$$\pi'_2\hat{\chi}^{-}_3=[2](I_2I^{-}_2)(fI_1)=([2]f)I_1=f([2]I_1)=f\pi_2$$

Note that $I_1$ is not unique, for if we replace $I_1$ by $T_i I_1$ where $T_i$ is translation by 2-torsion point then the resulting map will still satisfy the commutative diagram and similarly $I_2$ is not unique. Finally, we want the map $\hat{\chi}^{-}_3$ to be defined over $\mathbb{Q}$ and this is equivalent to
$$I_2\sigma(I^{-}_2) f \sigma(I_1)I^{-}_1=f, \forall \sigma \in G_\mathbb{Q}$$

Explicitly, the maps $\sigma(I_1)I^{-}_1$ and
$I_2 \sigma(I^{-}_2)$ are both translation by 2-torsion point. As $f$ is a 3-isogeny which induces
an isomorphism between $Z^{-}[2]$ and $Z^{-}_1[2]$, we conclude that this holds if we pick the suitable choices
for $I_1$ and $I_2$ in the sense we match up the cocycles (which means we match up the 2-torsion points induced by the cocycles). One then checks the
resulting $\hat{\chi}^-_3$ maps $X^{-}$ onto $X^{-}_1$.

\end{proof}
Now we work through the algorithms in [F3]. Following the notation in [F3], we firstly embed our 2-covering
in $\mathbb{P}^5$ with coordinates $(x_1:\ldots:x_6)$ in the way
$$\begin{pmatrix} x_1&x_2&x_3 \\ x_4&x_5&x_6 \end{pmatrix}$$
such that the action of translation by 2-torsion points is multiplication by certain 2-by-2 matrices on the left and the action of translation by 3-torsion points is multiplication by certain 3-by-3 matrices on the right.
The algorithm to do this is described in Proposition 5.1 [F3].

Fix an elliptic curve $E:y^2=x^3+ax+b$ and the 2-covering we have here is the one we
got from Lemma 4.6, namely
$$-3y^2=\Delta_E(a\lambda^4+6b\lambda^3\mu-2a^2\lambda^2\mu^2-2ab\lambda\mu^3+(-a^3/3-3b^2)\mu^4).$$
In the notation of Proposition 5.1 [F3], we have
$$U(\lambda,\mu)=-3\Delta_E(a\lambda^4+6b\lambda^3\mu-2a^2\lambda^2\mu^2-2ab\lambda\mu^3+(-a^3/3-3b^2)\mu^4).$$
The invariant $H$ is given in Definition 4.1 [F3], which is
$$H=\frac{1}{3}\det\left(\frac{\partial^2 U}{\partial \lambda \partial \mu}\right).$$
Thus we can compute the image $A_{2,3}$ as in Proposition 5.1 [F3]. Explicitly, we have
$$A_{2,3}=\begin{pmatrix} -9\frac{\partial H}{\partial \mu} & -3\frac{\partial U}{\partial \mu} & \lambda y\\
9\frac{\partial H}{\partial \lambda} & 3\frac{\partial U}{\partial \lambda} & \mu y \end{pmatrix}
=\begin{pmatrix} A_1&A_2&A_3\\A_4&A_5&A_6 \end{pmatrix}.$$

It is well-known that  a genus one curve in $\mathbb{P}^5$ embedded by a complete linear system of degree 6 is given by the intersection of 9 quadrics, and we are free to replace each of them by some invertible linear transformation of them. To work out the image,
take a polynomial ring of 6 variables and let $T$ be the set of all 21 monomials of degree 2. Then evaluate each monomial in $T$ at entries
$A_i$ above and reduce these modulo $I$,
where $I=\langle y^2-U(\lambda,\mu) \rangle$. Then we denote the set of the resulting 21 monomials by $T'$. Also denote the set of all the monomials in
$y,\lambda,\mu$ which appear in $T'$ by $S$. Finally we work out the kernel of the matrix, whose $i,j$-th entry is the coefficient of the $j$-th monomial in $S$ in the $i$-th monomial $T'$.

We give our resulting quadrics after taking some suitable invertible linear transformation (to tidy up the equations of the quadrics)
\begin{align*}
q_1&=-x_1x_4+8aD^3x_2x_5-3D^5x_3x_6+12bD^3x^2_5,\\
q_2&=x_1x5+72bDx^2_2+x_2x_4-32a^2Dx_2x_5+24aD^3x_3x_6\\
&-24abDx^2_5+36bD^3x^2_6,\\
q_3&=-72bDx_1x_5-72bDx_2x_4-576a^2D^4x3^2-1728abD^4x_3x_6-x^2_4\\
&+32a^2Dx_4x_5-8aD^3x_5^2-1296b^2D^4x^2_6,\\
q_4&=-24Dax_2x_3-36bDx_2x_6-36bDx_3x_5+x_4x_6+8a^2Dx_5x_6,\\
q_5&=-3x^2_1+9D^5x^2_3-ax^2_4-D^2x_4x_5+3aD^5x^2_6,\\
q_6&=24aDx^2_2+72bDx_2x_5+18D^3x^2_3-x_4x_5-8a^2Dx^2_5+6aD^3x^2_6,\\
q_7&=x_1x_6+72bDx_2x_3-16a^2Dx_2x_6+x_3x_4-16a^2Dx_3x_5-24abDx_5x_6,\\
q_8&= -3x_1x_2+36D^3ax^2_3+108bD^3x_3x_6-ax_4x_5+D^2x^2_5/2-12a^2D^3x^2_6,\\
q_9&=-3x_1x_3-ax_4x_6+D^2x_5x_6/2,\\
\end{align*}
where we use the notation $D=\Delta_E=-16(4a^3+27b^2)$.

Then we follow the procedure in section 6 [F3], and the curve $X^{-}_E(6)$ is given by intersection of 9-quadrics in variables $X_1,\ldots,X_6$ with the relation
$$\begin{pmatrix} X_1&X_2&X_3\\X_4&X_5&X_6 \end{pmatrix}=\begin{pmatrix} x_1&x_2&x_3 \\ x_4&x_5&x_6
\end{pmatrix} g$$
where $g$ is a change of coordinate matrix to put $C_{Y^-}$ in Weierstrass form. The formula of $g$ we use here can be found on page 24 in [F3]. The explicit algorithm to compute the flex matrix is given in Algorithm 6.2 and 6.3. Following the procedure, we obtain the inverse of the flex matrix (as in fact
this is more useful for our calculation)
$$g^{-1}=\begin{pmatrix} -648D^2a\alpha-972bD^2 &0&1\\972bD^2\alpha-216a^2D^2&0&\alpha\\0&-18D\alpha^2-6aD&0
\end{pmatrix},$$
where $\alpha$ is a root of $x^3+ax+b=0$.

Finally, as is described on page 24 in [F3], the new quadrics have coefficients in an extension of $\mathbb{Q}$
but the space they span has a basis with coefficients in $\mathbb{Q}$. So what we do is compute $x_i$ in terms
of $X_i$ and then substitute these into the quadrics $q_i$. Thus the new quadrics have coefficients in
$\mathbb{Q}(\alpha)$. But as they have a basis over $\mathbb{Q}$ we collect the coefficients of $\alpha^2,\alpha,1$ and the vanishing of them will give 3-quadrics.
Thus, we obtain 27-quadrics with coefficients in $\mathbb{Q}$ and we check they span a 9-dimensional vector space.

We use our resulting quadrics $q_i$ given above and apply this method then it can be checked by direct
computation that the 27-quadrics obtained after twisted by the flex matrix span a 9-dimensional vector space, and
a basis can be given by the following 9-quadrics (again after some invertible linear transformation on the basis):
\begin{align*}
s_1&=-6x_1x_5 + 24ax_1x_6 - 6x^2_2 + 24ax_2x_3 - 6x_2x_4 + 24ax_3x_4\\
 &+ 72bx_4x_6 + 2ax^2_5 + 8a^2x_5x_6 +Dx^2_6,\\
s_2&=-6x_1x_3 + x_2x_5 + 2ax_2x_6 - 36bx^2_3 + 2ax_3x_5 + 16a^2x_3x_6 + x_4x_5\\
& + 2ax_4x_6 + 12abx^2_6,\\
s_3&=12ax_1x_3 + 18bx_1x_6 + 18bx_3x_4 - 2ax_4x_5 - 4a^2x_4x_6 + 3bx^2_5,\\
s_4&=-12ax_2x_3 - 18bx_2x_6 - 18bx_3x_5 - 3x^2_4 - ax^2_5 + 4a^2x_5x_6,\\
s_5&=3x^2_2 - 48a^2x^2_3 - 144abx_3x_6 - 36bx_4x_6 + ax^2_5 - 8a^2x_5x_6 + 16a^3x^2_6,\\
s_6&=-3x_1x_4 + 18bx_2x_3 - ax_2x_5 - 4a^2x_2x_6 - 4a^2x_3x_5 - 6abx_5x_6,\\
s_7&=-108bx_1x_3 + 6ax^2_2 - 24a^2x_2x_3 + 18bx_2x_5 - 36abx_4x_6 - 2a^2x^2_5\\
& - 8a^3x_5x_6-aDx^2_6,\\
s_8&=3x_1x_2 - 72abx^2_3 - 216b^2x_3x_6 + ax_4x_5 + 8a^2x_4x_6 - 12abx_5x_6 + 24a^2bx^2_6,\\
s_9&=36x^2_1 + 12ax^2_2 + 12ax^2_4 + 4a^2x^2_5 + Dx_5x_6.\\
\end{align*}
This gives a model for $X^{-}_E(6)$ which proves the first part of Theorem 1.1.
\subsection{Explicit Equations And Examples}
$\quad$We now compute the morphism from $X^{-}_E(6)$ to $X^{-}_E(3)$ using the model we obtained.
In [F3], by Theorem 2.5 (or page 24), the map from the 6-covering to the 3-covering is recovered by taking
the 2-by-2 minors of the matrix
$$\begin{pmatrix} x_1&x_2&x_3\\x_4&x_5&x_6 \end{pmatrix},$$
where our curve $X^{-}_E(6)$ is the intersection of 9 quadrics in $\mathbb{P}^5$ with coordinates $(x_1:\ldots:x_6)$.
Thus, this gives a map from $X^{-}_E(6)$ to $C_{Y^{-}}$ by our construction:
$$\chi^{-}_2(x_1:\cdots:x_6)=(u:v:y),u=x_2x_6-x_3x_5, v=x_3x_4-x_1x_6, y=x_1x_5-x_2x_4.$$

For any point $P=(x_1:\cdots:x_6) \in X^{-}_E(6)$, take $\chi^{-}_2(P)$ defined above and then
by Proposition 4.2 we obtain the morphism
$$X^{-}_E(6) \to X_E(2), \quad (x_1:\cdots:x_6) \mapsto (u:v)=(x_2x_6-x_3x_5: x_3x_4-x_1x_6)$$
and compose this with $X_E(2) \to X(1)$ we obtain
the $j$-map $X^{-}_E(6) \to X(1)$ and let $j(P)$ be the image of $P$.

Suppose $(\lambda:\mu)$ is the image of $P$ on $X^-_E(3)$ then we must have $j(\lambda:\mu)=j(P)$ where
$j(\lambda:\mu)=j(\mathcal{E}'_{\lambda,\mu})$ with $\mathcal{E}'_{\lambda,\mu}$ given in Theorem 2.6. Therefore,
\begin{theorem} The forgetful morphism $X^{-}_E(6) \to X^{-}_E(3)$ is given by
$$(x_1:\cdots:x_6) \mapsto (x_3/3:x_6).$$
\end{theorem}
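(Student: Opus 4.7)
The plan is to combine the explicit morphism $\psi^-_2\colon X^-_E(6)\to X_E(2)$ with the fiber-product structure of $X^-_E(6)$ over $X(1)$. Composing $\chi^-_2$, given by the $2\times 2$ minors of the $2\times 3$ matrix of coordinates, with the projection in Proposition 4.2 yields
$$\psi^-_2\colon\ (x_1:\cdots:x_6)\longmapsto (u:v)=(x_2x_6-x_3x_5\,:\,x_3x_4-x_1x_6),$$
and Theorem 2.5 then expresses the j-invariant $j(F_{u,v})$ as an explicit rational function in $x_1,\ldots,x_6,a,b$. On the $X^-_E(3)$ side, Theorem 2.6 gives $j(\mathcal{E}'_{\lambda,\mu})$ as an explicit rational function in $\lambda,\mu,a,b$.

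As in Section 3.1, the commutative diagram of Section 4.1 realises $X^-_E(6)$ as a fiber product $X^-_E(3)\times_{X(1)}X_E(2)$, so the image of a point $P\in X^-_E(6)$ under $\psi^-_3$ is characterised, among the twelve preimages on $X^-_E(3)$ of $j(\psi^-_2(P))$, by its compatibility with $\psi^-_2(P)$. The candidate $(\lambda:\mu)=(x_3/3:x_6)$ is natural from Fisher's $\mathbb{P}^5$-embedding in [F3], which is engineered so that the columns of the $2\times 3$ matrix parametrise the $3$-covering structure; reading off the last column $(x_3,x_6)^T$ and rescaling by the factor of $3$ already present in Proposition 4.1 produces precisely $(x_3/3:x_6)$.

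To finish I would verify the rational identity
$$j\!\left(\mathcal{E}'_{x_3/3,\,x_6}\right)\ \equiv\ j\!\left(F_{\,x_2x_6-x_3x_5,\,x_3x_4-x_1x_6}\right)$$
on $X^-_E(6)$, that is, modulo the ideal $(s_1,\ldots,s_9)\subset\mathbb{Q}[a,b,x_1,\ldots,x_6]$ cutting out $X^-_E(6)$ in $\mathbb{P}^5$. Since $(x_1:\cdots:x_6)\mapsto(x_3/3:x_6)$ is manifestly a morphism of the correct degree, the j-invariant match together with the fiber-product compatibility pins down $\psi^-_3$ uniquely.

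The main obstacle is this symbolic verification: after clearing denominators, one must show that the numerator of the difference of the two j-invariants lies in $(s_1,\ldots,s_9)$. The identity is routine in principle, but the intermediate polynomials are large (degree $12$ in six projective variables with coefficients in $\mathbb{Z}[a,b]$), so the check is naturally performed by computer algebra, consistent with the paper's reliance on MAGMA.
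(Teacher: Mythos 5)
Your proposal follows essentially the same route as the paper: compute $\psi^-_2$ from the $2\times 2$ minors of the coordinate matrix, equate the two $j$-invariants, and verify symbolically that the resulting degree-$12$ identity holds modulo the ideal of the nine quadrics at the candidate $(\lambda:\mu)=(x_3/3:x_6)$. The only point where the paper is slightly more explicit is the uniqueness of the image point: rather than appealing to fiber-product compatibility (which by itself does not single out one of the twelve preimages of $j(P)$), it notes that the image of an $L=\mathbb{Q}(a,b)$-rational point must be $L$-rational and, in Remark 4.8, that the degree-$12$ polynomial has a unique linear factor over $L$.
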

\begin{proof} We implement the above algorithm where $j(\mathcal{E}'_{\lambda,\mu})$ is a degree 12 homogenous polynomial in $\lambda,\mu$. Let $L=\mathbb{Q}(a,b)$.
If $(x_1:\cdots:x_6)$ is a $L$-rational point then the image must also be $L$-rational.
Thus, we look for a $L$-rational point at which the degree 12 polynomial vanishes. A direct computation shows that the polynomial
is equal to zero at the point $(\lambda:\mu)=(x_3/3:x_6)$ (recall the point $(x_1:\cdots:x_6)$ satisfies the equation
of $X^{-}_E(6)$).
\end{proof}
\begin{remark}
One can also start with $L$ and define the function field of $X^{-}_E(6)$ over $L$ and then factorize the polynomial to check there is a unique linear factor which agrees with our computation above.
\end{remark}
Theorem 4.7 shows the families of elliptic curves parameterized by $Y^{-}_E(6)$ are $\mathcal{E}'_{x_3/3,x_6}$ as Theorem 2.6 with
$(x_1:\ldots:x_6) \in Y^{-}_E(6)$.
\begin{example}
Let $t \in \mathbb{Q}$ be any rational number and let $a=-\frac{8}{27}t^2, b=\frac{64}{729}t^3$.
Then there is a rational point on $X^{-}_E(6)$ given by
$$x_1=-\frac{2^9t^4}{2187},x_2=\frac{2^6t^3}{243},x_3=\frac{2}{9}t,x_4=-\frac{2^6t^3}{243}, x_5=\frac{2^5}{27}t^2,x_6=1.$$
Then the image of this point on $X^-_E(3)$ is $(\frac{2}{27}t:1)$. Taking $t=\frac{9}{2}$ we obtain a pair of non-isogenous elliptic curves which are reverse $6$-congruent
\begin{align*}
E:& y^2=x^3-6x+8 &\quad 6912v1\\
F:& y^2=x^3 -216x + 1728 & \quad 6912p1\\
\end{align*}
Also, one computes that the rank of both $X_E(6)$ and $X^{-}_E(6)$ are positive.
\end{example}

\begin{example}
The curves in the previous examples are quadratic twists to each other. In this example, we give a list of infinitely many values of $j$-invariant such that for any curve with $j$-invariant in the list, $X^{-}_E(6)$ has at least one $\mathbb{Q}$-rational point.
For each $j \neq 0,1728$ there exists a unique value $t$ such that the elliptic curve $E_t: y^2=x^3+tx+t$ has $j$-invariant $j$. For $(u:v) \in \mathbb{P}^1(\mathbb{Q})$, we take
$$t=-\frac{27}{8}\frac{(u-v)^3(u+v)^3}{(u^2-uv+v^2)^2(u^2-uv-\frac{1}{2}v^2)}.$$
Then by direct computation we obtain a rational point on $X^{-}_{E_t}(6)$:
\begin{align*}
x_1&=t^3(\frac{2}{3}u^7 - \frac{7}{2}u^6v + \frac{15}{2}u^5v^2 - \frac{26}{3}u^4v^3 + \frac{11}{2}u^3v^4 - \frac{3}{2}u^2v^5 -
    \frac{1}{3}uv^6)\\
x_2&=\frac{3t^2}{4}(-u^7 + u^6v + 4u^5v^2 - 2u^4v^3 - 5u^3v^4 + u^2v^5 +2uv^6)\\
x_3&=\frac{3t}{16}(u^7 + u^6v - 3u^5v^2 - 3u^4v^3 + 3u^3v^4 +
    3u^2v^5 - uv^6 - v^7)\\
x_2&=-\frac{3t^2}{4}(-u^7 + u^6v + 4u^5v^2 - 2u^4v^3 - 5u^3v^4 + u^2v^5 +2uv^6)\\
x_5&=t^2(u^7 - \frac{7}{2}u^6v + \frac{7}{2}u^5v^2 - \frac{7}{2}u^3v^4 + \frac{7}{2}u^2v^5 - uv^6)\\
x_6&=\frac{t}{8}(-2u^7 + 3u^6v +u^5v^2 - 5u^4v^3 + 4u^3v^4 + u^2v^5 -3uv^6 + v^7)\\
\end{align*}
\end{example}
In particular, the above example shows that
\begin{corollary}
There are infinitely many pairs of non-isogenous elliptic curves which are reverse $6$-congruent.
\end{corollary}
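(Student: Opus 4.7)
The plan is to read the corollary directly out of Example 4.9. That example produces, for every $(u:v)\in\mathbb{P}^1(\mathbb{Q})$, a $\mathbb{Q}$-rational point on $X^{-}_{E_t}(6)$ where $E_t\colon y^2=x^3+tx+t$ and $t=t(u,v)$ is the displayed rational function. By the moduli interpretation of $Y^{-}_{E_t}(6)$ each such point yields an isomorphism class $(F_t,\phi_t)$ with $F_t$ an elliptic curve over $\mathbb{Q}$ that is reverse $6$-congruent to $E_t$, so we immediately have a $\mathbb{P}^1$-family of reverse $6$-congruent pairs $(E_t,F_t)$.

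The first step is to verify that infinitely many pairwise non-isomorphic $E_t$ occur. Since $j(E_t)=1728\cdot 4t/(4t+27)$ is injective in $t$, it suffices to see that $t(u,v)$ is not constant. This is clear from the explicit formula: its numerator $(u-v)^3(u+v)^3$ and denominator $(u^2-uv+v^2)^2(u^2-uv-\tfrac{1}{2}v^2)$ are coprime, so $t$ defines a non-constant morphism $\mathbb{P}^1\to\mathbb{P}^1$ and consequently $j(E_t)$ assumes infinitely many rational values as $(u:v)$ ranges over $\mathbb{P}^1(\mathbb{Q})$.

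The key step is to show that only finitely many of the pairs $(E_t,F_t)$ are $\mathbb{Q}$-isogenous. Using Theorem 4.7, the image of the given rational point in $X^{-}_{E_t}(3)$ is $(\lambda:\mu)=(x_3/3:x_6)=(2t/27:1)$, and substituting into the formulae for $\mathfrak{c}^{*}_4,\mathfrak{c}^{*}_6$ in Theorem 2.6 (with $c_4=-t/27$, $c_6=-t/54$) produces an explicit Weierstrass model of $F_t$ and hence a rational function $j(F_t)\in\mathbb{Q}(t)$. If infinitely many pairs were $\mathbb{Q}$-isogenous, Mazur's classification of rational cyclic isogeny degrees would confine the degree to a fixed finite set $\mathcal{N}$, and pigeonhole would single out one $N\in\mathcal{N}$ for which $\Phi_N(j(E_t),j(F_t))$ vanishes at infinitely many rational $t$, where $\Phi_N$ is the classical modular polynomial. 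Since $\Phi_N(j(E_t),j(F_t))\in\mathbb{Q}(t)$, it would then have to vanish identically. A single specialization with $E_t$ and $F_t$ having distinct conductors (a mechanical check on a handful of rational $(u:v)$) contradicts this for every $N\in\mathcal{N}$ at once, leaving only finitely many $t$ with isogenous pairs and so cofinitely many non-isogenous pairs.

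The only real obstacle is this last step. The Mazur-plus-specialization route reduces non-isogeny to a finite sequence of polynomial identity checks in $\mathbb{Q}(t)$, each of which is routine once $j(F_t)$ has been written down from Theorems 4.7 and 2.6. The alternative of arguing non-isogeny over $\bar{\mathbb{Q}}$ would need to replace Mazur by a bound of Masser--W\"ustholz type, but $\mathbb{Q}$-isogeny is all that is needed for the stated corollary and matches the evidence cited in Example 4.8.
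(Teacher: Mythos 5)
Your overall route is the paper's: the corollary is read off from the explicit $(u:v)$-family (Example 4.10 in the paper's numbering; you call it 4.9), and the paper leaves the non-isogeny verification implicit, so the detailed argument you supply is a welcome addition rather than a departure. However, its final step fails. Having reduced, via Mazur--Kenku and pigeonhole, to the putative identity $\Phi_N(j(E_t),j(F_t))\equiv 0$ for a single cyclic degree $N$, you propose to refute it by exhibiting one specialization at which $E$ and $F$ have distinct conductors. That refutes nothing: $\Phi_N(j_1,j_2)=0$ detects cyclic $N$-isogeny over $\bar{\mathbb{Q}}$ only, and is blind to twists and hence to conductors. The paper's own conductor-$6912$ example is the counterexample to your implication: the curves $6912$v$1$ and $6912$p$1$ are quadratic twists, both with $j=-1728$, so $\Phi_1(j(E),j(F))=0$, yet they lie in different $\mathbb{Q}$-isogeny classes; a generic quadratic twist pair moreover has distinct conductors and still satisfies $\Phi_1=0$. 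So ``distinct conductors at one fibre'' shows only that that single fibre is non-isogenous; it neither shows $\Phi_N\not\equiv 0$ nor contradicts the hypothesis that infinitely many other fibres are isogenous.

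The repair is to make the finite check the one that actually addresses the identity: for each $N$ in the Mazur--Kenku list, evaluate $\Phi_N\bigl(j(E_{t_0}),j(F_{t_0})\bigr)$ at one specialization and verify it is nonzero; and treat separately the degree $N=1$ in case $j(E_t)\equiv j(F_t)$ identically (the modular polynomial cannot exclude a family of twists --- this is exactly the degeneration exhibited by the conductor-$6912$ example, where every fibre of that first family has $j=-1728$), by checking the twisting class is nontrivial for all but finitely many parameters. Two smaller slips: the image $(x_3/3:x_6)=(2t/27:1)$ on $X^{-}_E(3)$ that you quote belongs to the first example, whose curve is $y^2=x^3-\tfrac{8}{27}t^2x+\tfrac{64}{729}t^3$, not to the $(u:v)$-family with $E_t:y^2=x^3+tx+t$; for the latter the image involves the displayed degree-$7$ forms in $u,v$, so $j(F)$ and the vanishing argument must be run in $\mathbb{Q}(u/v)$ rather than $\mathbb{Q}(t)$. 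This does not change the shape of the argument, but the explicit $j(F)$ you would feed into $\Phi_N$ is not the one you wrote down. One should also discard the finitely many $(u:v)$ for which the displayed point is a cusp or $t$ is undefined.
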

\subsection{A Simpler Birational Model}
Finally we derive a much simpler birational model of $X^{-}_E(6)$.
\begin{proposition} The forgetful morphism from $X^{-}_E(6) \to C_{X^{-}}$ is given by
$$(x_1,x_2,x_3,x_4,x_5) \mapsto (x_3,\frac{-x_1x_5+x_2x_4}{2}),$$
where we take an affine piece $x_6=1$ on $X^{-}_E(6)$ and $\mu=1$ on $C_{X^{-}}$
\end{proposition}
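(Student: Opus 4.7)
The plan is to reduce the claim to a single polynomial identity modulo the ideal of nine quadrics $s_1,\ldots,s_9$ defining $X^-_E(6)$, and then verify that identity by a symbolic computation.

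First, I would pin down the $\lambda$-coordinate of the image by a consistency argument. Let $\widetilde{\chi}^-_3 : X^-_E(6) \to C_{X^-}$ denote the forgetful morphism we are trying to compute; by the commutative diagram of Section 4.1, its composition with $\rho^-_3 : C_{X^-} \to X^-_E(3)$ equals the forgetful morphism $X^-_E(6) \to X^-_E(3)$. By Theorem 4.7 the latter is $(x_1 : \cdots : x_6) \mapsto (x_3/3 : x_6)$, and by Proposition 4.1 the map $\rho^-_3$ sends $(\lambda : \mu : y) \mapsto (\lambda/3 : \mu)$. In the affine patches $x_6 = 1$ and $\mu = 1$ this forces the $\lambda$-coordinate of $\widetilde{\chi}^-_3$ to be $x_3$, which matches the proposed formula.

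Next, I would guess the $y$-coordinate and then verify it. The $y$-coordinate must be a rational function on $X^-_E(6)$ whose square satisfies
\[
y^2 \;\equiv\; \Delta_E\bigl(a x_3^4 + 6b x_3^3 - 2a^2 x_3^2 - 2ab x_3 - \tfrac{a^3}{3} - 3b^2\bigr) \pmod{(s_1,\ldots,s_9)}.
\]
A natural family of candidates is provided by the three $2\times 2$ minors of the matrix $\begin{pmatrix} x_1 & x_2 & x_3 \\ x_4 & x_5 & x_6\end{pmatrix}$, which already play the structural role in the map $\chi^-_2$ recalled just before Theorem 4.7. The minor from columns 1 and 2 gives the candidate $y = -(x_1 x_5 - x_2 x_4)/2$, and the task reduces to checking that
\[
\Bigl(\tfrac{-x_1 x_5 + x_2 x_4}{2}\Bigr)^2 - \Delta_E\bigl(a x_3^4 + 6b x_3^3 - 2a^2 x_3^2 - 2ab x_3 - \tfrac{a^3}{3} - 3b^2\bigr)
\]
lies in the ideal generated by $s_1,\ldots,s_9$ (after setting $x_6 = 1$) inside $\mathbb{Q}(a,b)[x_1,\ldots,x_5]$. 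This is a Gröbner basis / ideal membership computation, analogous to those already carried out elsewhere in the paper, and is best performed in MAGMA.

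Once the membership is verified, the candidate map does land on $C_{X^-}$, and its composition with $\rho^-_3$ recovers the forgetful morphism to $X^-_E(3)$. Because $\rho^-_3$ has degree $2$, there are at most two such lifts of $(x_1 : \cdots : x_6) \mapsto (x_3/3 : x_6)$, differing by the involution $y \mapsto -y$ on $C_{X^-}$; our sign convention fixes the correct one. I expect the main obstacle to be not the setup, which is forced by the structure, but the bulk of the ideal membership verification: the quadrics $s_i$ and the polynomial $f(x_3)$ are large, and reducing the difference to zero in the ideal requires a machine computation rather than a hand calculation.
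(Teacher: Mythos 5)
Your proposal is correct and follows essentially the same route as the paper: the first coordinate is forced to be $x_3$ by compatibility with the forgetful morphisms to $X^{-}_E(3)$ (Theorem 4.7 and Proposition 4.1), the second coordinate is verified by a direct symbolic check that $\bigl(\tfrac{-x_1x_5+x_2x_4}{2}\bigr)^2$ reduces to $\Delta_E(ax_3^4+6bx_3^3-2a^2x_3^2-2abx_3-a^3/3-3b^2)$ modulo the defining quadrics, and the residual sign ambiguity is absorbed by the $[-1]$ involution. Your additional remark that the candidate arises as one of the $2\times 2$ minors already used for $\chi^{-}_2$ is a helpful motivation but does not change the argument.
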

\begin{proof} Using the morphisms $X^-_E(6) \to X^-_E(3)$ and $X^- \to X^-_E(3)$ we conclude that the morphism $X^{-}_E(6) \to X^{-}$ must have the form
$$(x_1,x_2,x_3,x_4,x_5) \mapsto (x_3, y)$$
where $y$ satisfies the equation
$$y^2=\Delta_E(ax^4_3+6bx^3_3-2a^2x^2_3-2abx_3+(-a^3/3-3b^2)).$$
A direct computation shows that $y=\pm \frac{-x_1x_5+x_2x_4}{2}$ satisfy
the condition.
It does not matter which one we pick because it differs from the other one by applying the
$[-1]$ map on $X^{-}_E(6)$.
\end{proof}
We can now simplify our defining equations for $X^{-}_E(6)$.
\begin{corollary} The curve $X^{-}_E(6)$ is birational to the following curve defined by two equations in $\mathbb{A}^3$ with coordinates $(x,y,z)$:
\begin{align*}
f=&z^3-(36ax^2+12a^2)z+216bx^3-144a^2x^2-216abx\\
&-(16a^3+216b^2)+y(64abx+96b^2)27/\Delta_E,\\
g=&y^2-\Delta_E(ax^4+6bx^3-2a^2x^2-2abx+(-a^3/3-3b^2)).\\
\end{align*}
with the forgetful morphisms
$$X^{-}_E(6) \to C_{X^{-}}: (x,y,z) \mapsto (x,y),~ C_{X^{-}} \to X^{-}_E(3): (x,y) \mapsto x/3.$$
\end{corollary}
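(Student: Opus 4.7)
The plan is to combine Proposition 4.10 with Theorem 1.1 (the 9-quadric model) to produce an elimination-style birational map from $X^-_E(6)$ onto the closed subscheme $V(f,g) \subset \mathbb{A}^3$. Since the equation $g$ is literally the equation for $C_{X^-}$ on the affine piece $\mu=1$ obtained in Proposition 4.1, the role of $g$ is immediate: it cuts out $C_{X^-}$ in the $(x,y)$-plane, and by Proposition 4.10 the assignment $(x_1{:}\ldots{:}x_5{:}1)\mapsto (x_3,(-x_1x_5+x_2x_4)/2)$ realises the forgetful morphism $X^-_E(6)\to C_{X^-}$. Thus the composition $X^-_E(6)\to C_{X^-}\to X^-_E(3)$ in the statement is automatic from $(x,y)\mapsto x/3$.

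The content of the corollary therefore reduces to producing a third coordinate $z$ so that the triple $(x,y,z)$ generates the function field of $X^-_E(6)$ and satisfies the cubic relation $f=0$. First I would invoke Lemma 3.10 in the reverse setting (already noted at the start of Section 4.1 to carry over verbatim) to conclude that $\chi_3^-:X^-_E(6)\to X^-$ is a 3-isogeny over $\bar{\mathbb{Q}}$, and hence the function field extension $\mathbb{Q}(X^-_E(6))/\mathbb{Q}(C_{X^-})$ is of degree~3. Consequently any rational function $z$ on $X^-_E(6)$ whose three values on a generic fibre of $\chi_3^-$ are distinct will satisfy an irreducible cubic $f\in\mathbb{Q}(C_{X^-})[z]$, and $(x,y,z)$ will then give a birational model.

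To pin down such a $z$ I would pick a simple linear combination of the affine coordinates $x_1,\ldots,x_5$ (for instance a scalar multiple of $x_5$, possibly shifted by a multiple of $x_2$ so that the resulting cubic becomes depressed in $z$, matching the absence of a $z^2$ term in $f$) and compute its minimal polynomial over $\mathbb{Q}(x,y)$. Concretely, I would form the ideal $I$ in $\mathbb{Q}(a,b)[x_1,\ldots,x_5]$ generated by $s_1,\ldots,s_9$ (with $x_6=1$) together with $x-x_3$, $2y-(-x_1x_5+x_2x_4)$ and $z-\ell(x_1,\ldots,x_5)$, and then eliminate $x_1,x_2,x_3,x_4,x_5$; the result is expected to be a principal ideal generated by a single polynomial in $x,y,z$ that, after reducing modulo $g$ (which has degree two in $y$), is linear in $y$ and cubic in $z$. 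Comparing this polynomial with the displayed $f$ then fixes the correct scaling of $\ell$.

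The main obstacle is carrying out this elimination cleanly: the 9 quadrics $s_i$ involve the large parameter $D=\Delta_E$ in several terms, so a naive Gröbner basis computation in $\mathbb{Q}(a,b)$ is heavy. In practice I would work one prime at a time, or exploit the bigraded structure (the indexing of the $s_i$ reflects a $2\times 3$ matrix layout tied to the action of $2$-torsion on the left and $3$-torsion on the right) to stratify the calculation. Once $f$ is produced, birationality follows because the map $(x_1{:}\ldots{:}x_6)\mapsto(x,y,z)$ factors through an injection of function fields of the same transcendence degree, and $f$ being irreducible over $\mathbb{Q}(C_{X^-})$ (which can be verified by specialising $a,b$ at a single pair where the discriminant of $f$ in $z$ is nonzero) forces the induced extension to match the degree-3 extension $\mathbb{Q}(X^-_E(6))/\mathbb{Q}(C_{X^-})$.
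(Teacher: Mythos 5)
Your proposal follows essentially the same route as the paper: the paper likewise sets $x=x_3$, $y=(-x_1x_5+x_2x_4)/2$ (via its Proposition 4.12, which you cite with a shifted number) and takes $z=x_5$ directly, verifies the relations $f$ and $g$ by direct computation with the nine quadrics, and concludes birationality by a function-field argument. Your observation that irreducibility of the cubic $f$ over $\mathbb{Q}(C_{X^-})$ forces $\mathbb{Q}(x,y,z)$ to exhaust the degree-$3$ extension is in fact slightly more careful than the paper's one-line transcendence-degree assertion, but the overall strategy is identical.
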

\begin{proof}
The equation for $g$ is essentially the same as the defining equation for $X^{-}$. If we take $z=x_5,x=x_3,y=(-x_1x_4+x_2x_3)/2$, then we obtain a relation defined by $f$ above, Also $x,y$ satisfy the equation defined by $g$, using Proposition 4.12. Since $X^{-}_E(6)$ is a curve and hence the function field
has transcendental degree $1$ and thus the field generated by $x,z,y$ is the same as the function field of
$X^{-}_E(6)$. Therefore, we conclude that the curve defined by equations $f$ and $g$ is birational to
$X^{-}_E(6)$. Finally, by using Theorem 4.7 and Proposition 4.12 we obtain the desired forgetful morphisms.
\end{proof}

{\bf Zexiang Chen}\\
Department of Pure Mathematics and Mathematical Statistics,\\
Centre for Mathematical Sciences,\\
University of Cambridge.\\
e-mail: zc231@cam.ac.uk

\end{document}